\definecolor{red-}{rgb}{1.0,0.0,0.0}
\definecolor{grey}{rgb}{0.6, 0.6, 0.6}
\definecolor{brown}{rgb}{0.5,0.2,0.0}
\definecolor{brown-}{rgb}{0.0,0.1,1.0}
\definecolor{green-}{rgb}{0.0, 0.6, 0.0}
\definecolor{gold}{rgb}{0.8,0.7,0.0}
\definecolor{black}{rgb}{0.0,0.0,0.0}
\definecolor{DarkGreen}{rgb}{0.0,0.3,0.2}
\definecolor{LightGreen}{rgb}{0.8,1.0, 0.8}
\definecolor{yellow}{rgb}{0.9,0.9,0.0}
\theoremstyle{plain}
\newcommand{\E}{{\mathcal E}}
\newcommand{\F}{{\mathcal F}}
\newcommand{\LL}{{\mathcal L}}
\newcommand{\Ol}{{\mathcal O}}
\newcommand{\T}{{\mathcal T}}
\newcommand{\U}{{\mathcal U}}
\newcommand{\V}{{\mathcal V}}
\newcommand{\proj}{\mathbb P}
\newcommand{\pu}{{\mathbb P^1}}
\newcommand{\pd}{{\mathbb P^2}}
\newcommand{\pt}{{\mathbb P^3}}
\newcommand{\pn}{{\mathbb P^n}}
\newcommand{\FF}{\mathbb F}
\newtheorem{theorem}{Theorem}[section]
\newtheorem{lemma}[theorem]{Lemma}
\newtheorem{proposition}[theorem]{Proposition}
\newtheorem{corollary}[theorem]{Corollary}
\theoremstyle{definition}
\newtheorem{example}[theorem]{Example}
\theoremstyle{remark}
\newtheorem{rmk}[theorem]{Remark}
\def\thetitle{Triple solids and scrolls}
\begin{document}

\author{Antonio Lanteri}

\author{Carla Novelli}

\address{Dipartimento di Matematica ``Federigo Enriques'', \newline
\indent
Universit\`a degli Studi di Milano, \newline
\indent
via C. Saldini, 50, \newline
\indent
I-20133 Milano, Italy
}
\email{antonio.lanteri@unimi.it}

\address{Dipartimento di Matematica ``Giuseppe Peano", \newline
\indent
Universit\`a degli Studi di Torino, \newline
\indent
via Carlo Alberto, 10, \newline
\indent
I-10123 Torino, Italy}
\email{carla.novelli@unito.it}

\subjclass[2010]
{Primary:14C20,14E20,14J60;\ Secondary:14J30.
Key words and phrases: triple cover; scroll; vector bundle; adjunction.}

\title{Triple solids and scrolls}

\begin{abstract} Let $Y$ be a smooth projective variety of dimension $n \geq 2$ endowed
with a finite morphism  $\phi:Y \to \mathbb P^n$ of degree $3$, and suppose that $Y$, polarized by some ample line bundle,
is a scroll over a smooth variety $X$ of dimension $m$. Then $n \leq 3$ and either $m=1$ or $2$.
When $m=1$, a complete description of the few varieties $Y$ satisfying these conditions is provided.
When $m=2$, various restrictions are discussed showing that in several instances the
possibilities for such a $Y$ reduce to the single case of the Segre product $\mathbb P^2 \times \mathbb P^1$.
This happens, in particular, if $Y$ is a Fano threefold as well as if the base surface $X$ is $\mathbb P^2$.
\end{abstract}

\dedicatory{dedicated to Enrique Arrondo on the occasion of his 60th birthday}


\maketitle


\section*{Introduction} \label{intro}

As observed by Fujita in his book \cite[(10.9.1), (10.11)]{libroFujita}, and reaffirmed in the supplementary
note circulating as a manuscript ``Problems on Polarized Varieties'', the problem of classifying
the triple covers of $\mathbb P^n$
is still open, in particular for $n=2$ and $3$, in which cases such coverings are not
necessarily of triple section type \cite{Fu0}.
Moreover, as far as we know, there has been no recent contribution
in this direction, except for \cite{FPV}, where the authors consider a special class of surfaces
represented as triple planes. Actually, in spite of several general results on triple
covers existing in the literature (e.g.\ see \cite{Mi3ple}, \cite{T}),
nothing seems explicitly aimed at the study of $3$-dimensional triple solids and, more specifically, of
threefolds which admit at the same time a projective bundle structure.
The unsolved case left open in \cite[Sec.\ 4]{LN3} exactly reflects this lack of knowledge.

More generally, let $Y$ be a projective $n$-fold and let $\phi:Y \to \mathbb P^n$ be a finite morphism of degree $d$.
Suppose that $d=2$ or $3$ and that $Y$ is a scroll
for some polarization at the same time. Then, a classical result
of Lazarsfeld \cite{Laz1} implies that $n=2$ if $d=2$ and that $n=2$ or $3$ if  $d=3$. In this paper we investigate
the possible varieties occurring precisely in this setting.
In Section \ref{Examples} we present some concrete examples
to illustrate this situation.
The hearth of the paper
is Section \ref{classification} in which we provide their description, which is complete as far as scrolls over a curve
are concerned. The crucial remark is that if $Y$ is a scroll with respect to some polarizing line bundle, then it is also
a scroll, via the same projection, with respect to the ample and spanned line bundle $H:=\phi^*\mathcal O_{\mathbb P^n}(1)$.
This allows us to work with the ample and spanned vector bundle obtained by pushing down $H$
via the scroll projection.
According to this,
the case of double solids is easily settled by Proposition \ref{double}, while the more delicate
case of triple solids that are scrolls over a curve is dealt with in
Theorem \ref{maintheorem}. The last three Sections
are devoted to discussing the case of triple solids that are scrolls over a surface, which
looks even more interesting and intriguing.
More specifically, in Section \ref{on surfaces}
we exhibit several different situations
in which the only possibility for $Y$ is given by the Segre product $\mathbb P^2 \times \mathbb P^1$;
in particular, this is the case if we assume in addition that $Y$ is a Fano manifold.  Furthermore,
we have the opportunity to amend a flaw in a result of Ballico \cite[Theorem]{Ba}.
Moreover,
in Section \ref{constraints} we provide further restrictions on $Y$ deriving from the consideration of
the triple plane induced  by $\phi$ on the general element $S \in \phi^*|\mathcal O_{\mathbb P^3}(1)|$
and of its Tschirnhaus bundle. Finally, in Section \ref{overP2} we focus on scrolls
whose base surface is $\mathbb P^2$. Proposition \ref{P2} shows the extremely severe conditions
that this hypothesis entails on the various numerical characters involved in the discussion.
Comparing the Tschirnhaus bundle of $\phi$ with that of the triple plane induced on
$S$ in a special situation arising from our analysis, we finally succeed to prove that necessarily
$Y=\mathbb P^2 \times \mathbb P^1$, even in this case (Theorem \ref{conclusion}).

However, from a complementary point of view suggested by this result,
we would like to emphasize that also for scrolls $Y$ over $\mathbb P^2$
with respect to some ample line bundle $L$, distinct from the product,
it may happen that $Y$ contains a smooth surface $S$ with the structure of a triple plane even
as a very ample divisor, but with $\mathcal O_Y(S) \not= L$
(see Remark \ref{final}).

Throughout the whole paper a relevant role is played by Miranda's formulas, which allow
to express the invariants of a triple plane by means of the Chern classes of its Tschirnhaus bundle.


\section{Background material} \label{background}

\setcounter{equation}{0}

We work over the field of complex numbers and we use the standard notation from algebraic geometry.
By a little abuse we make no distinction between a line bundle and the corresponding invertible sheaf.
Moreover, the tensor products of line bundles are denoted additively.
The pullback $i^\ast \E$ of a vector bundle $\E$ on $X$ by an embedding of projective varieties
$i\colon Y \hookrightarrow X$ is denoted by $\E_Y$.
We denote by $K_X$ the canonical bundle of a smooth variety $X$.

A {\em polarized manifold} is a pair $(X,\LL)$ consisting of a smooth projective variety $X$ and an ample line bundle $\LL$ on $X$.
The sectional genus and the $\Delta$-genus of a polarized manifold $(X,\LL)$ are defined as
$g(X,\LL)=1+\frac{1}{2}\big(K_X+(\dim X -1)\LL\big)\cdot \LL^{\dim X -1}$
and $\Delta(X,\LL)=\dim X+\LL^{\dim X}-h^0(X,\LL)$, respectively.
A polarized manifold $(X,\LL)$ is said to be a {\em scroll} (over
$W$) if it is a classical scroll, namely if there exist a smooth projective variety $W$ of positive
dimension and a surjective morphism $\pi\colon X \to W$ such that
$(F,\LL_F)\cong\big(\proj^m,\Ol_{\proj^m}(1)\big)$ with $m=\dim X - \dim
W$ for any fiber $F$ of $\pi$. This condition is equivalent to
saying that $X = \proj_W(\F)$ for some ample
vector bundle $\F$ of rank $\geq 2$ on $W$, and $\LL$ is
the tautological line bundle.
If $\LL$ is a line bundle on a
projective manifold $X$, we denote by $\varphi_\LL$ the rational
map $X --\to \proj^{h^0(\LL)-1}$ associated with the complete linear
system $|\LL|$.

We will use the symbol $\mathbb F_e$ to denote the Segre--Hirzebruch surface $\mathbb P\big(\mathcal O_{\mathbb P^1} \oplus \mathcal O_{\mathbb P^1}(-e)\big)$
of invariant $e (\geq 0)$, and $\sigma$ and $f$
will denote the section of minimal self-intersection $-e$ and a fiber respectively.


\section{Some covers of $\pn$ admitting a scroll structure} \label{Examples}
\setcounter{equation}{0}

Let $Y$ be a smooth projective variety with $\dim Y = n$ and let $d \geq 2$ be
an integer: if $Y$ is endowed with a
$d$-uple branched covering of $\mathbb P^n$ we will refer to $Y$ as a \text{\it{ $d$-uple $n$-solid}}.
Any smooth projective variety $Y$ of dimension $n$ embedded in some projective space
can be regarded as a $d$-uple $n$-solid, where $d=\deg Y$, by projecting it onto a $\mathbb P^n$
from a suitable linear space.
This is true in particular when $Y$ is a scroll over a positive dimensional
projective variety. In this case, however, the
integers $n$ and $d$ are not completely unrelated, due to the following general fact.

\begin{lemma}\label{basic}
Let $Y$ be any projective bundle
over a smooth positive dimensional
projective variety and a $d$-uple $n$-solid at the
same time. Then $d \geq n \geq 2$.
\end{lemma}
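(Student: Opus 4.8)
The plan is to exploit the fact that a projective bundle structure forces strong positivity on the tangent bundle along the fibers, which clashes with the existence of a finite map of low degree to $\mathbb P^n$. Write $\pi\colon Y\to W$ for the bundle projection, so that a general fiber $F$ of $\pi$ is a projective space $\mathbb P^m$ with $m=\dim Y-\dim W = n-\dim W\geq 1$, and let $\phi\colon Y\to\mathbb P^n$ be the degree $d$ finite morphism. The restriction $\phi|_F\colon F=\mathbb P^m\to\mathbb P^n$ is a finite morphism (finiteness is preserved by restriction to a closed subvariety, since fibers of $\phi|_F$ are contained in fibers of $\phi$), hence $\phi(F)$ is an $m$-dimensional subvariety of $\mathbb P^n$, and $\phi|_F$ onto its image has some degree $d'\leq d$ (because the fiber of $\phi|_F$ over a general point of $\phi(F)$ is contained in a fiber of $\phi$, which has $d$ points counted with multiplicity).

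The key input is a theorem of Lazarsfeld \cite{Laz1}: a finite morphism $\mathbb P^m\to Z$ onto a subvariety $Z\subseteq\mathbb P^N$ which is not contained in a hyperplane and spans a linear space of dimension $\leq 2m-1$ — equivalently, by Barth--Larsen type bounds, any nondegenerate finite map from $\mathbb P^m$ to a projective variety of dimension $\leq m$ inside a projective space of dimension at most $2m-1$ — is forced to be an isomorphism onto a linear $\mathbb P^m$; more precisely, Lazarsfeld's result says that if $f\colon\mathbb P^m\to\mathbb P^N$ is a finite morphism with $N\leq 2m-1$ then $f$ is an embedding of $\mathbb P^m$ as a linear subspace. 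Applying this to the composition $\mathbb P^m=F\xrightarrow{\phi|_F}\phi(F)\hookrightarrow\mathbb P^n$: if $n\leq 2m-1$ we would conclude $\phi|_F$ is a linear embedding, so $\phi$ would be injective on $F$; but then $\phi$ restricted to a general fiber is one-to-one, and since $\phi$ has degree $d$ a fiber-counting argument (the general fiber of $\phi$ meets each $\pi$-fiber in at most one point, and $Y$ is covered by the $\pi$-fibers) bounds things — the cleanest route is simply: Lazarsfeld forces $n\geq 2m$ is \emph{false}, so instead we get the constraint $n\geq m+1$ automatically and, reworking the bound, $d\geq n$. Let me restate the mechanism I will actually use: since $\phi|_F\colon\mathbb P^m\to\mathbb P^n$ is finite and nonconstant, its image is a nondegenerate variety inside the linear span $\Lambda=\langle\phi(F)\rangle\subseteq\mathbb P^n$, so $\dim\Lambda\leq n$; Lazarsfeld's theorem says a finite morphism from $\mathbb P^m$ to $\mathbb P^{\dim\Lambda}$ cannot exist unless $\dim\Lambda\geq 2m$ or the map is a linear embedding; combined with $\dim\Lambda\le n$ and an elementary degree computation $\deg\phi|_F\geq 2^{?}$ versus $\deg\phi=d$, one extracts $d\geq n$.

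Concretely, the steps in order: (1) reduce from an arbitrary polarization to $H=\phi^*\mathcal O_{\mathbb P^n}(1)$ and observe $Y$ is still a scroll over $W$ via $\pi$, so a general fiber $F\cong\mathbb P^m$ has $H_F\cong\mathcal O_{\mathbb P^m}(1)$; (2) note $\phi|_F$ is given by a subsystem of $|\mathcal O_{\mathbb P^m}(1)|$, hence is a linear projection of $\mathbb P^m$, in particular it is a \emph{morphism} which is automatically an embedding onto a linear $\mathbb P^m\subseteq\mathbb P^n$, so $\phi(F)$ is an $m$-plane and $\phi|_F$ is injective; (3) therefore the general fiber of $\phi$ meets the general fiber of $\pi$ in at most one point; (4) now count: $Y$ is swept out by the $(n-m)$-parameter family... wait, $\pi$-fibers are parametrized by $W$ of dimension $n-m$, and a general $\phi$-fiber is $0$-dimensional of length $d$, so each of its $\leq d$ points lies on a distinct $\pi$-fiber, giving no immediate numerical bound — so instead I invoke Lazarsfeld's theorem in the form actually needed: the composite $Y\xrightarrow{\pi\times\phi}W\times\mathbb P^n$ is finite onto its image, and restricting the positivity statement for ample vector bundles (the pushforward $\phi_*H$ argument sketched in the introduction) yields $d\geq n$. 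The main obstacle will be step (4): translating the injectivity of $\phi$ on fibers into the sharp inequality $d\geq n$ rather than the weaker $d\geq m+1$, which presumably requires analyzing how the $\pi$-fibers through the $d$ points of a general $\phi$-fiber must span $\mathbb P^n$ and invoking that their images are $m$-planes in general position, forcing $dm\geq n$ and hence (since $n=m+\dim W\ge m+1$) the cleaner bound $d\geq n$ after accounting for $\dim W\leq dm-m$. I expect the honest argument to be Lazarsfeld's nondegeneracy theorem applied directly: a smooth $n$-fold with a $\mathbb P^m$-bundle structure, $m\geq 1$, admitting a degree $d$ finite cover of $\mathbb P^n$ must have $d\geq n$, with equality analysis deferred.
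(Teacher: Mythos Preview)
Your proposal does not constitute a proof; it is a sequence of attempted approaches, each abandoned or left incomplete, and you yourself flag step (4) as ``the main obstacle'' without resolving it. There are two specific gaps.

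First, the claim in step (1) that ``$Y$ is still a scroll over $W$ via $\pi$'' with respect to $H=\phi^*\mathcal O_{\mathbb P^n}(1)$ --- i.e.\ that $H|_F\cong\mathcal O_{\mathbb P^m}(1)$ --- is not justified. The lemma only assumes $Y$ is a projective bundle; a priori $H|_F\cong\mathcal O_{\mathbb P^m}(a)$ for some $a\geq 1$. The paper establishes $a=1$ only later (beginning of Section~3), under the extra hypothesis that $d$ is prime, via a Chern--Wu computation; it is not automatic here. Without $a=1$, your step (2) collapses, since $\phi|_F$ need not be a linear embedding.

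Second, even granting that each $\phi|_F$ is a linear embedding, you never extract $d\geq n$. The inequality $dm\geq n$ you mention does not imply $d\geq n$ (take $m$ large), and the parenthetical ``after accounting for $\dim W\leq dm-m$'' is neither proved nor obviously relevant. Your various invocations of Lazarsfeld's theorem are also misstated: the result about finite maps $\mathbb P^m\to\mathbb P^N$ with $N\leq 2m-1$ is not what is needed, and you recognize this mid-paragraph.

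The paper's proof is a two-line application of a different Lazarsfeld theorem (the Barth-type result, \cite[Proposition~3.1]{Laz1}): a projective bundle over a positive-dimensional base has Picard number $\rho(Y)\geq 2$; but if $d\leq n-1$, Lazarsfeld's theorem forces $H^2(\mathbb P^n,\mathbb C)\to H^2(Y,\mathbb C)$ to be an isomorphism, hence $\rho(Y)=1$, a contradiction. No analysis of fibers is needed.
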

\proof Clearly $n \geq 2$. Suppose that $d \leq n-1$. Since $Y$ has Picard number
$\rho(Y) \geq 2$, we get a contradiction by a well known result of Lazarsfeld (see
\cite[Proposition 3.1]{Laz1}). \qed

\begin{example}\label{rational} Let $Y=\proj_\pu(\V)$, where
\begin{equation}\label{normal} \V=\Ol_\pu\oplus
\Ol_\pu(\alpha_1)\oplus \dots \oplus\Ol_\pu(\alpha_{n-1})
\end{equation}
with $0\leq \alpha_1\leq\dots\leq \alpha_{n-1}$ and let
$\alpha=\sum_{i=1}^{n-1} \alpha_i=\deg\V$. Let $\xi$ be the
tautological line bundle, let $F$ be a fiber of the projection
$\pi\colon Y \to \pu$ and let $L=\xi+bF$ for some integer $b$. Due
to the normalization \eqref{normal}, we know that $L$ is ample if
and only if it is very ample if and only if $b>0$ (\cite[Lemma
3.2.4]{BS}). So let $b>0$; then the morphism $\varphi_L$
embeds $Y$ in $\proj^N$ as a scroll of degree
\begin{equation}\label{grado}
d:=L^n=\deg\big(\V\otimes\Ol_\pu(b)\big)=\alpha+nb
\end{equation}
by the Chern--Wu relation, where
$N+1=h^0(L)=h^0\big(\V\otimes\Ol_\pu(b)\big)=n+nb+\alpha=n+d$. Let
$\Lambda$ be a general linear subspace of $\proj^N$ of dimension $N-1-n$.
Then, projecting $\varphi_L(Y)$ from $\Lambda$ onto a $\pn\subset
\proj^N$ skew with $\Lambda$, we get a map $\phi\colon Y \to \pn$,
which is a finite morphism of degree $d$ and $L=\phi^\ast\Ol_{\pn}(1)$. On the other hand,
$(Y,L)$ is a scroll over $\pu$.
Clearly $d \geq n$, according to Lemma \ref{basic}. In this
specific case this simply follows from \eqref{grado}, taking into
account that $b>0$ and $\alpha \geq 0$.
\end{example}
\setcounter{equation}{0}

\begin{rmk}\label{ris1}
If $(Y,L)$ is
an $n$-dimensional scroll as in Example \ref{rational} and $d=n$, then
$(Y,L)=\big(\pu\times\proj^{n-1},
\Ol_{\pu\times\proj^{n-1}}(1,1)\big)$. Actually, equality $d=n$
implies $b=1$ and $\alpha=0$ by \eqref{grado}, and the latter in turn
implies that $\V=\Ol_\pu^{\oplus n}$.
\end{rmk}

In particular, according to Example \ref{rational}, we get: for
$(n,d)=(2,2)$ the smooth quadric surface of $\pt$ described as a
double plane via projection from a general point; for
$(n,d)=(2,3)$ the rational cubic scroll of $\proj^4$ described as
a triple plane via projection from a general line; for
$(n,d)=(3,3)$ the Segre product $\pu\times \pd\subset \proj^5$
described as a triple solid via projection from a general line.
In all these cases, of course, the line bundle $L$ making $Y$ a scroll is very ample.

\begin{example}\label{irrational}
Let $B \subset \mathbb{P}^2$ be an irreducible projective curve
whose dual $B^{\vee} \subset \mathbb{P}^{2 \vee}$ is a smooth
curve of degree $d$. The following construction is inspired by
\cite[\S 3]{Cat}. In $\mathbb P^2 \times \mathbb P^{2 \vee}$
consider the incidence variety $T= \{(x,\ell) \in \mathbb P^2 \times
\mathbb P^{2 \vee}\ |\ x \in \ell\}$. Then $T$ is a smooth threefold
which is endowed with two $\mathbb{P}^1$-bundle structures
$p\colon T \to \mathbb P^2$, $q\colon T \to \mathbb P^{2 \vee}$
via the projections of $\mathbb P^2 \times \mathbb P^{2 \vee}$
onto the factors.

Now consider the smooth curve $B^{\vee}$ and let $S =
q^{-1}(B^{\vee})$. Clearly $S$ is a smooth surface and
$\pi:=q|_S\colon S \to B^{\vee}$ makes
$S$ a $\pu$-bundle over $B^\vee$. On the other hand, since $S$ is
a divisor on $T$ belonging to the linear system $|q^*\mathcal
O_{\mathbb P^{2 \vee}}(d)|$, we see that $f:=p|_{S}\colon S \to
\mathbb P^2$ is a finite morphism of degree $\deg f = p{^{-1}}(x)
\cdot S$ where $x \in \mathbb{P}^2$ is any point. We thus get
\begin{equation}
\deg f = \big(p^\ast\mathcal
O_{\mathbb P^2}(1)\big)^2 \cdot q^\ast\mathcal O_{\mathbb P^{2
\vee}}(d) = d. \nonumber
\end{equation}
Looking at the construction more closely, we can
note that the branch locus of the $d$-uple plane $f\colon S \to \mathbb
P^2$ is $B$. To see this, note that $S = \{(x,\ell)\ |\ x\in \ell,
\ell \in B^{\vee}\}$, while, $p^{-1}(x) = \{(x,\ell)\ |\ \ell \ni
x \} = \{(x,\ell)\ |\ \ell \in L_x \}$, for any $x \in \mathbb
P^2$, where $L_x$ is the line in $\mathbb P^{2 \vee}$
corresponding to the pencil of lines through $x$. Now fix $x \in
\mathbb P^2$: then
\begin{equation}
f{^{-1}}(x)= p^{-1}(x) \cap S = \{(x,\ell)\ |\ \ell \ni x,
\ell \in B^{\vee}\}= \{(x,\ell)\ |\ \ell \in L_x \cap
B^{\vee}\}. \nonumber
\end{equation}
This shows that the pre-images of $x$ via $f\colon S
\to \mathbb P^2$ correspond to the intersections of $L_x$ with
$B^{\vee}$. As $B^{\vee}$ is smooth those pre-images are not $d$
distinct points if and only if the line $L_x$ is tangent to
$B^{\vee}$. By biduality, this is equivalent to saying that $x \in
B$.

Let $L=f^\ast \Ol_\pd(1)$. Then $L$ is an ample and spanned line
bundle on $S$. Note that $L=\big(p^\ast \Ol_\pd(1)\big)_S$.
Moreover, for any fiber $\frak{f}$ of $\pi\colon S \to B^{\vee}$
we have $L \cdot\frak{f}=p^\ast \Ol_\pd(1)\cdot \big(q^\ast
\Ol_{\pd^\vee}(1)\big)^2=1$. This says that $(S,L)$ is a scroll
over $B^\vee$, a smooth curve of genus $\binom{d-1}{2}$.

Set $\LL=\mathcal{O}_{\mathbb P^2 \times \mathbb P^{2 \vee}}(1,1)$
and recall that $(\mathbb P^2 \times \mathbb P^{2 \vee},\LL)$ is
the del Pezzo fourfold of degree six. Since $T\in |\LL|$,
$(T,\LL_T)$ is the following del Pezzo threefold of degree six:
$T=\proj_{\pd^\vee}(T_{\pd^\vee})$, with $\LL_T$ being the
tautological line bundle \cite[Chapter I, \S8 (8.7), (8.8)]{libroFujita}. Furthermore,
\begin{equation}
\LL_S=(\LL_T)_S=\big(p^\ast\Ol_\pd(1)+q^\ast\Ol_{\pd^\vee}(1)\big)_S=
f^\ast\Ol_\pd(1)+\pi^\ast\Ol_{B^\vee}(1)=L+\pi^\ast\Ol_{B^\vee}(1). \nonumber
\end{equation}
This shows that
$S=\proj_{B^\vee}\big((T_{\pd^\vee}(-1))_{B^\vee}\big)$, with $L$
being the tautological line bundle.
\end{example}

In particular this gives
\begin{example}\label{ris2}
Set $d=3$. Then the previous construction exhibits a smooth
surface which is a triple plane and a scroll over an elliptic
curve at the same time. We can show that
$\big(T_{\pd^\vee}(-1)\big)_{B^\vee}=\U \otimes \Ol_{B^\vee}(z)$,
where $z\in {B^\vee}$, and $\U$ is an indecomposable vector bundle
of degree one on the elliptic curve ${B^\vee}$. Thus, letting
$M=L+\frak{f}_0$, where $\frak{f}_0$ is any fiber of $\pi$, we see
that $M$ is very ample \cite[Exerc.\ 2.12, p.\ 385]{Ha} and $\varphi_M$ embeds $S$ as an
elliptic quintic scroll in $\proj^4$. Since $L=M-\frak{f}_0$, we
can regard the triple plane $f\colon S \to \pd$ as the projection
of the quintic elliptic scroll from its fiber $\frak{f}_0$ onto a
plane skew with it. Note that here $L$ is ample and spanned but not very ample.
\end{example}


\section{Double and triple solids
admitting a scroll structure} \label{classification}
\setcounter{equation}{0}

In this Section we slightly change the perspective.
Let $\phi: Y \to \mathbb P^n$ be a $d$-uple $n$-solid, where $d=2$ or $3$:
we wonder when $Y$ admits an ample line bundle $L$ such that $(Y,L)$
is a scroll over a projective manifold of dimension $m \geq 1$.
Let us consider the line bundle $H=\phi^*\mathcal O_{\mathbb P^n}(1)$,
which, of course, is ample and spanned (in principle, it could also be
very ample, as some examples in Section \ref{Examples} show).
Let $\pi:Y \to X$ be the projection of the scroll $(Y,L)$: since $\text{\rm{Pic}}(Y)$
is generated by $L$ and $\pi^*\text{\rm{Pic}}(X)$ we can write
$H = aL + \pi^*\mathcal O_X(D)$, where  $a$ is a positive integer
and  $D$ is a divisor on $X$. Then
\begin{eqnarray} \label{d=aK}
d &=& H^n = (aL + \pi^*D)^n  \\ \nonumber
&=& a^nL^n + n a^{n-1}L^{n-1} \cdot \pi^*D + \dots + \binom{n}{m} a^{n-m}L^{n-m} \cdot (\pi^*D)^m \\ \nonumber
&=& aK,  \nonumber
\end{eqnarray}
since $n \geq m+1$, where
\begin{equation} \label{K}
K = a^{n-1}L^n + na^{n-2}L^{n-1} \cdot \pi^*D + \dots + \binom{n}{m}a^{n-m-1}L^{n-m} \cdot (\pi^*D)^m.
\end{equation}
Now, if $d\ (\geq 2)$ is prime, we deduce from \eqref{d=aK} that either $a = 1$ (and then $K = d$), or $a = d$, which implies that
$1 = K = d^{n-m-1}K^{\prime}$, where $K^{\prime}$ is an integer, and this is impossible if $n \geq m + 2$. On the other hand,
if $a = 1$, then the pair $(Y, H)$ itself is a scroll over $X$, hence we can suppose that $Y = \mathbb P(\mathcal E)$,
where $\mathcal E = \pi_*H$ is a vector bundle on $X$ of rank $n - m + 1$, which is ample and spanned,
so being its tautological line bundle $H$. In particular, let $d=2$;
then $n=2$ by Lemma \ref{basic}, hence
$m = 1$, i.e. $X$ is a curve. Then necessarily $a = 1$;
otherwise we would get $a = 2$,
and then $1 = K = 2\big(L^2 + \deg(D)\big)$ by \eqref{K}, which is absurd.
Similarly, let $d=3$; then $n=2$ or $3$ by Lemma \ref{basic}, hence either $n=2$ and $m=1$, or $n=3$ and $m=1, 2$.
If $n = 3$, i.e. $Y$ is a threefold, then we obtain that necessarily $a=1$.
Otherwise, we would get $a=3$ and
\begin{equation} \nonumber
1 = K = \begin{cases}
9L^3+9L^2 \cdot \pi^*D  & \text{if  $m=1$},\\ \nonumber
9L^3+9L^2 \cdot \pi^*D + 3 L \cdot (\pi^*D)^2 & \text{if  $m=2$},\\
\end{cases}
\end{equation}
by \eqref{K},
which is clearly impossible. Therefore, $a=1$ in all these cases, hence the problem becomes determining
when $(Y,H)$ itself is a scroll.
Note that case $d=3$ with $n=2$ is not covered by the previous analysis; it will be
discussed in the proof of Theorem \ref{maintheorem}.
For $d=2$ the answer is very easy and is given by the following

\begin{proposition} \label{double}
Let $\phi: Y\to \mathbb P^n$ be any smooth double $n$-solid with $n \geq 2$.
Then there is no polarization on $Y$ making it a scroll over a smooth projective variety of positive dimension
except for $\big(Y, \phi^\ast \mathcal O_{\mathbb P^n}(1)\big) = \big(\pu\times\pu, \Ol_{\pu\times\pu}(1,1)\big)$.
\end{proposition}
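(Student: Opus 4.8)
The plan is to exploit the reduction already carried out just before the statement. If $Y$ admits an ample $L$ with $(Y,L)$ a scroll over a positive-dimensional smooth variety, then $n=2$ by Lemma~\ref{basic}, the base is a curve $X$, and in the expression $H=aL+\pi^\ast\Ol_X(D)$ one necessarily has $a=1$; hence $(Y,H)$ is itself a scroll over $X$, so that $Y=\proj_X(\E)$ with $H$ the tautological line bundle and $\E=\pi_\ast H$ an ample and spanned vector bundle of rank $2$. The Chern--Wu relation gives $\deg\E=H^2=\deg\phi=2$. It then remains only to determine $X$ and $\E$ and to recognize the resulting pair.

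The central step will be to prove $X=\pu$. First I would use that $\phi\colon Y\to\pd$ is a finite flat double cover, so that $\phi_\ast\Ol_Y=\Ol_{\pd}\oplus\Ol_{\pd}(-\delta)$ with the branch curve in $|\Ol_{\pd}(2\delta)|$, $\delta\geq 1$; since the first cohomology of every line bundle on $\pd$ vanishes, this yields $q(Y)=h^1(\Ol_Y)=0$. On the other hand $Y$ is a $\pu$-bundle over the curve $X$, so $q(Y)=g(X)$, forcing $g(X)=0$. (An alternative is to equate $K_Y=(\delta-3)H$, read off from the double cover, with $K_Y=-2H+\pi^\ast(K_X+\det\E)$, read off from the scroll structure: restricting to a fiber of $\pi$ gives $\delta=1$ and $K_X+\det\E=0$ in $\Pic(X)$, whence $2g(X)-2+\deg\E=0$ and again $g(X)=0$.)

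With $X=\pu$, I would write $\E=\Ol_\pu(a)\oplus\Ol_\pu(b)$ with $a\leq b$ and $a+b=\deg\E=2$; ampleness forces $a\geq 1$, so $a=b=1$, i.e. $\E\cong\Ol_\pu^{\oplus2}\otimes\Ol_\pu(1)$. Consequently $Y=\proj_\pu(\Ol_\pu^{\oplus2})=\pu\times\pu$, and keeping track of the twist by $\Ol_\pu(1)$ (or, more simply, using $H^2=2$, $H\cdot f=1$ for a fiber $f$, and $\Pic(\pu\times\pu)=\ZZ^2$) identifies $H$ with $\Ol_{\pu\times\pu}(1,1)$. This produces exactly the exceptional pair $\big(Y,\phi^\ast\Ol_{\pd}(1)\big)=\big(\pu\times\pu,\Ol_{\pu\times\pu}(1,1)\big)$, and that pair really does occur (Example~\ref{rational} and Remark~\ref{ris1} with $(n,d)=(2,2)$). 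I do not expect a serious obstacle: the one delicate point is the reduction to $a=1$ and to a curve base, but this is precisely the computation displayed before the Proposition, so it can simply be invoked.
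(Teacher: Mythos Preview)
Your proposal is correct and essentially follows the paper's approach. In fact your parenthetical alternative --- comparing $K_Y=(\delta-3)H$ with $K_Y=-2H+\pi^\ast(K_X+\det\E)$ to force $\delta=1$ and $K_X+\det\E=\Ol_X$ --- is exactly the argument the paper gives; your primary route via $q(Y)=0$ from the splitting of $\phi_\ast\Ol_Y$ is an equally valid minor variant.
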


\begin{proof}
Let $\phi\colon Y \to \proj^{n}$ be the finite morphism of degree $2$ making $Y$ a double
$n$-solid and consider the ample and spanned line bundle $H:=\phi^\ast \Ol_{\proj^{n}}(1)$.
If $\pi:Y \to X$
is a scroll for some polarization, then $n=2$, $X$ is a curve and
$(Y,H)$ itself is a scroll via $\pi$  in view of the above discussion.
Then $\mathcal E:=\pi_*H$ is an ample and spanned rank-$2$ vector bundle on $X$.
Let $B \in |\Ol_\pd(2b)|$, $b \geq 1$, be the branch locus of $\phi$. Comparing the expression of
\begin{equation}
K_Y=-2H+\pi^\ast(K_X+\det\E) \nonumber
\end{equation}
with that given by the ramification formula
\begin{equation}
K_Y=\phi^\ast \Ol_\pd(b-3)=(b-3)H \nonumber
\end{equation}
we conclude that $b=1$ and $K_X+\det\E=\Ol_X$, because $H$ and $\pi^*\mathcal O_X(1)$
are linearly independent in $\text{\rm{Pic}}(Y)$.
This shows that $X=\mathbb P^1$
and $\det \mathcal E = \mathcal O_{\mathbb P^1}(2)$, which in turn implies
$\mathcal E= \mathcal O_{\mathbb P^1}(1)^{\oplus 2}$. Equivalently,
$(Y,H)=\big(\pu\times\pu,\Ol_{\pu\times\pu}(1,1)\big)$.
\end{proof}

\setcounter{equation}{0}

As to case $d=3$ is concerned, here let us start assuming that $m=1$.
\begin{theorem}\label{maintheorem}
Let $\phi: Y\to \mathbb P^n$ be any smooth triple $n$-solid with $n \geq 2$.
Then there is no polarization on $Y$ making it a scroll over a smooth projective curve
except for the following three pairs $\big(Y, \phi^\ast \Ol_{\proj^{n}}(1)\big)$:
\begin{itemize}
\item[$(1)$] $\big(\pd\times\pu, \Ol_{\pd\times\pu}(1,1)\big)$;
\item[$(2)$] $(\FF_1,[\sigma+2f])$, where $\sigma$ is the $(-1)$-section and $f$ is a fiber;
\item[$(3)$] $\big(\proj_C(\U),L\big)$, where $\U$ is an indecomposable vector bundle of degree one
on an elliptic curve $C$ and $L$ is the tautological line bundle of $\U\otimes\Ol_C(z)$,
$z$ being a point of $C$.
\end{itemize}
\end{theorem}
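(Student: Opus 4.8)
The plan is to push the natural polarization $H:=\phi^{\ast}\mathcal O_{\mathbb P^{n}}(1)$ down along the scroll projection $\pi$ and reduce everything to a statement about ample, globally generated vector bundles of degree $3$ on a smooth curve. By the discussion preceding Proposition \ref{double}, when $n=3$ one has $a=1$, so $(Y,H)$ is itself a scroll over the base curve $C$ via $\pi$ and $\mathcal E:=\pi_{\ast}H$ is an ample and globally generated rank-$3$ vector bundle on $C$ with tautological line bundle $H$, whence $\deg\mathcal E=H^{3}=3$. When $n=2$ the bundle $H$ is still ample (a pullback of an ample bundle by a finite morphism) and globally generated with $H^{2}=3$, but a priori $H\cdot F=a\in\{1,3\}$ for a fibre $F$ of $\pi$. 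I would first rule out $a=3$: then $H=3L+\pi^{\ast}D$ with $H^{2}=3$ forces $L^{2}=1$, so $\pi_{\ast}L$ is an ample rank-$2$ bundle of degree $1$ and hence $g(C)\geq 1$, and also $\deg D=-1$; using the Tschirnhaus-bundle relation $\phi_{\ast}K_{Y}=3K_{\mathbb P^{2}}+2c_{1}(\mathcal T)$ together with Riemann--Hurwitz one computes $c_{1}(\mathcal T)=(3g(C)+1)h$, and feeding this into Miranda's formulas for $\chi(\mathcal O_{Y})$ and $K_{Y}^{2}$ — which for a geometrically ruled surface must equal $1-g(C)$ and $8(1-g(C))$ — leads to a quadratic equation in $g(C)$ with non-square discriminant, a contradiction. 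Hence $a=1$, and exactly as for $n=3$ one obtains an ample, globally generated $\mathcal E:=\pi_{\ast}H$ on $C$, now of rank $2$ and degree $H^{2}=3$, with tautological line bundle $H$.

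Next I would classify the ample, globally generated bundles $\mathcal E$ of rank $r\in\{2,3\}$ and degree $3$ on a smooth curve $C$. Global generation gives $h^{0}(\mathcal E)\geq r$, while ampleness gives $h^{0}(\mathcal E^{\vee})=0$; combining this with the evaluation sequence $0\to\mathcal M\to\mathcal O_{C}^{\,h^{0}(\mathcal E)}\to\mathcal E\to 0$ and its dual yields $h^{0}(\mathcal M^{\vee})\geq h^{0}(\mathcal E)$, where $\mathcal M^{\vee}$ is globally generated of degree $3$ and rank $h^{0}(\mathcal E)-r$. The case $h^{0}(\mathcal E)=r$ is impossible (it would force $\mathcal E\cong\mathcal O_{C}^{\oplus r}$, which is not ample), so $\mathcal M^{\vee}$ has positive rank; peeling off general nowhere-vanishing sections exhibits a globally generated quotient line bundle $\mathcal L$ of $\mathcal M^{\vee}$ of degree $3$ with $h^{0}(\mathcal L)\geq r+1$. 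By Clifford's theorem and Riemann--Roch a degree-$3$ line bundle has $h^{0}\leq\max\{4-g(C),2\}$, so $h^{0}(\mathcal L)\geq 4$ forces $g(C)=0$ and $h^{0}(\mathcal L)\geq 3$ forces $g(C)\leq 1$. Thus for $r=3$ we get $g(C)=0$, and ampleness forces $\mathcal E=\mathcal O_{\mathbb P^{1}}(1)^{\oplus 3}$, i.e. $(Y,H)=(\mathbb P^{2}\times\mathbb P^{1},\mathcal O(1,1))$ — case $(1)$. For $r=2$ we get $g(C)\leq 1$: if $g(C)=0$ then ampleness forces $\mathcal E=\mathcal O_{\mathbb P^{1}}(1)\oplus\mathcal O_{\mathbb P^{1}}(2)$, so $Y=\FF_{1}$ with $H$ the tautological class $\sigma+2f$ — case $(2)$; if $g(C)=1$ then $\mathcal E$ must be indecomposable (a decomposable ample bundle of degree $3$ would have a summand of degree $\leq 1$, contradicting ampleness or global generation), hence stable of odd degree, so of the form $\mathcal U\otimes\mathcal O_{C}(z)$ with $\mathcal U$ indecomposable of degree $1$ — case $(3)$, as in Example \ref{ris2}.

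Finally I would verify that all three pairs genuinely occur: in each case $H$ is ample and globally generated with $H^{n}=3$ and $h^{0}(Y,H)\geq n+1$ (since $\phi^{\ast}$ is injective on sections), so a general $(n+1)$-dimensional subsystem of $|H|$ is base-point-free and defines a morphism $Y\to\mathbb P^{n}$ which is finite because $H$ is ample, surjective, hence of degree $H^{n}=3$, with pullback of $\mathcal O_{\mathbb P^{n}}(1)$ equal to $H$. The step I expect to be the main obstacle is disposing of the case $a=3$ for $n=2$ and, more generally, making the genus restrictions and the identification of the extremal bundles clean and uniform — in particular checking that the ample, globally generated, degree-$3$ bundles of rank $2$ on an elliptic curve are exactly the $\mathcal U\otimes\mathcal O_{C}(z)$ and that these produce the surface of case $(3)$ — with Miranda's formulas being the tool that disposes of the auxiliary case $a=3$.
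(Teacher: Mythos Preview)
Your overall strategy is sound and your classification of the ample, globally generated bundles $\mathcal E$ of degree $3$ via the evaluation sequence and Clifford's theorem is correct; this is a genuinely different route from the paper's, and it has the virtue of treating $n=2$ and $n=3$ uniformly. The paper instead bounds the genus by two separate devices: for $n=3$ it invokes Lazarsfeld's Barth-type theorem to get $h^1(\mathcal O_Y)=0$ directly, hence $q=0$; for $n=2$ (with $a=1$) it uses Miranda's formulas for $K_Y^2$ and $e(Y)$ to obtain the equation $q(q-1)=0$. Your Clifford/Lazarsfeld--Mukai argument avoids both the topological input and (for $a=1$) the triple-cover machinery, at the cost of a slightly longer vector-bundle discussion; the paper's approach is shorter once one is willing to quote those external results. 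For the $a=3$ subcase both you and the paper use Miranda's formulas, arriving at the same quadratic $9q^2-29q+12=0$.

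A few points to clean up. First, in the $a=3$ discussion your assertion ``$L^2=1$'' does not follow from $3L^2+2\deg D=1$ (only $L^2$ odd does), and the conclusion $g(C)\geq 1$ is neither justified nor needed for the Miranda computation; drop it. Second, the displayed identity $\phi_*K_Y=3K_{\mathbb P^2}+2c_1(\mathcal T)$ is not a correct formula, and your $c_1(\mathcal T)=(3g+1)h$ has the wrong sign: what one actually gets (via Riemann--Hurwitz on $\varphi^{-1}(\ell)$ and the sectional genus computation) is $b_1=c_1(\mathcal T)=-(3g+1)$. Finally, for case $(3)$ you still owe the verification that $H$ is globally generated on $\mathbb P_C(\mathcal U\otimes\mathcal O_C(z))$; the paper supplies this as a separate lemma using Reider's theorem (the normalized alternative $(e,b)=(1,2)$ is ruled out because $H|_\sigma$ would have degree $1$ on the elliptic section $\sigma$), and your appeal to Example~\ref{ris2} is not a substitute for this check.
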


\begin{proof}
Let $\phi\colon Y \to \proj^{n}$ be the finite morphism of degree $3$ making $Y$ a triple $n$-solid
and consider the ample and spanned line bundle $H:=\phi^\ast \Ol_{\proj^{n}}(1)$ again.
Assume that $(Y,L)$ is a scroll over a smooth curve $X$ of genus $q:=g(X)$ for some ample
line bundle $L$ and let $\pi\colon Y \to X$ be the scroll projection.
From Lemma \ref{basic} we know that $n=2$ or $3$.

First suppose that $n=3$. Then, according to the discussion at the beginning of this Section,
$(Y,H)$ itself is a scroll over $X$.
By \cite[Theorem 1]{Laz1} we know that $\phi$ induces an isomorphism
$0=H^1(\mathbb P^3,\mathbb C) \cong H^1(Y, \mathbb C)$. Therefore
$h^1(\mathcal O_Y)=0$, and then the scroll structure of $(Y,H)$ over $X$ implies
that $q=0$. Thus $(Y,H)$ is a scroll over $\mathbb P^1$, so
$Y=\proj_\pu\big(\oplus_{i=1}^3\Ol_\pu(a_i)\big)$.  Since $H$ is ample and $H^3=3$,
we derive $Y=\proj_\pu\big(\Ol_\pu(1)^{\oplus 3}\big)$, hence $(Y,H)=\big(\pd\times\pu, \Ol_{\pd\times\pu}(1,1)\big)$
(see Remark \ref{ris1}).

Next, let $n=2$. In this case what we observed at the beginning of this Section
implies that $a=1$, unless in the following case:
\begin{equation} \label{K=1}
a=3  \qquad   \text{\rm{and}}  \qquad  K = 1.
\end{equation}

{\it{Claim}.} Case \eqref{K=1} cannot occur.

To prove the claim, consider the scroll $(Y,L)$ again, set $\mathcal E^{\prime}=\pi_*L$, so that
$L^2 = \deg \mathcal E^{\prime}$,  and recall that $H=aL+\pi^*\mathcal O_X(D)$ for some divisor
$D$ on $X$. If \eqref{K=1} holds, then \eqref{K} gives
\begin{equation} \label{a3}
1 = K = 3 \deg \mathcal E^{\prime}+ 2 \deg D.
\end{equation}
Let's prove that \eqref{a3} does not occur.
We can write $\phi_\ast \Ol_Y = \Ol_\pd \oplus \T$, where $\T$, the Tschirnhaus bundle
of $\phi$, is a vector bundle of rank $2$ on $\pd$. Then the branch locus of $\phi$ is an
element of $|2 \det \T^{\vee}|$ \cite[Proposition 4.7]{Mi3ple}. Set $b_i = c_i(\T)$. By applying the Riemann--Hurwitz formula
to the curve $\phi^{-1}(\ell)$ where $\ell \subset \pd$ is a general line, we get
\begin{equation}\label{ga3}
2g(Y,H)-2 = 3(-2)+(-2b_1).
\end{equation}
On the other hand, since $K_Y = -2L + \pi^\ast(K_X+\det \E^{\prime})$, taking into account the
expression of $H$ and condition \eqref{a3}, the genus formula shows that
\begin{equation}
2g(Y,H)-2 = (K_Y+H) \cdot H = 2\ (3 \deg \E^{\prime} + 2 \deg D) + 6\ (q-1) = 6q -4. \nonumber
\end{equation}
Combining this with \eqref{ga3} we get
\begin{equation}\label{b1a3}
-b_1 = 3q+1.
\end{equation}
Now, since $Y$ is a $\pu$-bundle over $X$, we know that $K_Y^2=8(1-q)$ and the
topological Euler--Poincar\'e characteristic is $e(Y) = 4(1-q)$.
Thus, eliminating $b_2$ from Miranda's formulas for the triple plane $\phi:Y \to \mathbb P^2$ \cite[Proposition 10.3]{Mi3ple}
\begin{equation}\label{M form}
K_Y^2 = 27+12b_1+2b_1^2-3b_2 \qquad \text{and} \qquad e(Y) = 9 + 6b_1 + 4b_1^2 - 9b_2,
\end{equation}
and using \eqref{b1a3} we obtain the following equation $9q^2 - 29q + 12=0$,
which has no integral solution. This proves the claim.

Therefore $a=1$ even if $n=2$, hence $(Y,H)$ itself is a scroll over $X$; so
$g(Y,H)=q$ and then the Riemann--Hurwitz formula applied to the curve $\phi^{-1}(\ell)$ now gives
\begin{equation}\label{b1}
-b_1 = q+2,
\end{equation}
Moreover, $K_Y^2=8(1-q)$ and  $e(Y) = 4(1-q)$ again.  In this case, eliminating $b_2$ from
Miranda's formulas, we get $q(q-1)=0$. If $q=0$, from Example \ref{rational} we see that
$\alpha=b=1$, hence $Y=\mathbb P\big(\mathcal O_{\mathbb P^1}(1) \oplus \mathcal O_{\mathbb P^1}(2)\big)$:
this gives case (2) in the statement. On the other hand, for $q=1$ we get case (3).
This is a consequence of the following lemma.
\end{proof}

\setcounter{equation}{0}

\begin{lemma} \label{lemma}
Let $(Y,H)$  be a surface scroll over a smooth curve $C$ of genus one, for some ample and spanned
line bundle $H$. If $H^2=3$, then $Y=\mathbb P_C(\mathcal U)$, $\mathcal U$ being  the nontrivial
extension
\begin{equation}
0 \to \mathcal O_C \to \mathcal U \to \mathcal O_C(p) \to 0,  \nonumber
\end{equation}
with $p \in C$, and
$H=[\sigma + f]$, where $\sigma$ denotes the tautological section on $Y$.
\end{lemma}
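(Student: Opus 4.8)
The plan is to pin down the rank-two bundle $\mathcal E$ on the elliptic curve $C$ with $Y = \mathbb P_C(\mathcal E)$, $H$ the tautological bundle, and then identify $H$ explicitly. First I would normalize: write $Y = \mathbb P_C(\mathcal E)$ with $\mathcal E = \pi_*H$ ample and spanned of rank $2$, so that $H^2 = \deg \mathcal E$ and, by hypothesis, $\deg \mathcal E = 3$. Since $(Y,H)$ is a scroll over a curve of genus one, we have $h^1(\mathcal O_Y) = 1$, and because $H$ is the tautological bundle, $h^0(H) = h^0(\mathcal E)$. Ampleness and spannedness of $\mathcal E$ on the elliptic curve, together with $\deg \mathcal E = 3$, sharply restrict the isomorphism type: on an elliptic curve a rank-two bundle of degree $3$ is spanned iff each indecomposable summand has positive degree and no summand is a degree-one bundle with a base point — concretely the options are $\mathcal U_0 \oplus \mathcal O_C(q)$ with $\mathcal U_0$ indecomposable of degree $2$, or $\mathcal O_C(p) \oplus \mathcal O_C(q)$ (both degree one, each spanned), or an indecomposable bundle of degree $3$, or $\mathcal O_C(p+q) \oplus \mathcal O_C(r)$; I would observe that all decomposable and indecomposable degree-$3$ possibilities can, after twisting, be brought to the form $\mathcal U \otimes \mathcal O_C(z)$ where $\mathcal U$ is the nontrivial extension of $\mathcal O_C(p)$ by $\mathcal O_C$ of degree one, because $\mathbb P_C(\mathcal E) = \mathbb P_C(\mathcal E \otimes \mathcal L)$ and the scroll $Y$ together with its very-ample-adjusted tautological system is what must match Example~\ref{ris2}.

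The cleanest route, though, is to work directly with the numerical invariants already assembled in the proof of Theorem~\ref{maintheorem} rather than re-deriving the bundle classification. In the $q = 1$ case we have $-b_1 = q + 2 = 3$ from \eqref{b1}, $K_Y^2 = 8(1-q) = 0$, $e(Y) = 0$, and Miranda's formulas \eqref{M form} then force $b_2$, hence the Tschirnhaus bundle $\T$ of the triple plane $\phi$ is determined up to the numerical data $(b_1,b_2)$. On the $Y$ side, the scroll $Y = \mathbb P_C(\mathcal E)$ over an elliptic curve with $K_Y^2 = 0$ is automatic, so the real content is that $\deg \mathcal E = 3$ together with ampleness plus spannedness leaves exactly one surface $Y$ up to isomorphism: this is a known fact (Atiyah's classification of bundles on elliptic curves), namely that $\mathbb P_C(\mathcal E)$ for $\mathcal E$ ample-and-spanned of degree $3$ is isomorphic to $\mathbb P_C(\mathcal U)$ with $\mathcal U$ the nontrivial extension above, because any such $\mathcal E$ has, after a twist by a line bundle of degree $-1$, a section vanishing nowhere, giving the extension $0 \to \mathcal O_C \to \mathcal E(-\text{something}) \to \det \to 0$ with $\det$ of degree $1$, and the nontriviality is forced by indecomposability (a split bundle $\mathcal O_C \oplus \mathcal O_C(p)$ is spanned but yields a surface with a section of self-intersection $-1$, contradicting $K_Y^2 = 0$ being realized only in the balanced way — more precisely one checks the section of minimal self-intersection must have self-intersection $-1$ for $\mathcal U$ indecomposable of odd degree, and this is the $\sigma$ in the statement).

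Once $Y = \mathbb P_C(\mathcal U)$ is established, identifying $H$ is a short computation: write $H = \sigma + \pi^*\mathfrak a$ in $\operatorname{Pic}(Y)$ for some divisor $\mathfrak a$ on $C$, where $\sigma$ is the tautological section with $\sigma^2 = \deg \mathcal U = 1$. Then $H^2 = \sigma^2 + 2\,\sigma\cdot\pi^*\mathfrak a = 1 + 2\deg \mathfrak a$, and $H^2 = 3$ gives $\deg \mathfrak a = 1$; so $H = [\sigma + f]$ after absorbing the degree-one class into a fiber $f$ (using $\operatorname{Pic}^1(C)$ homogeneity and that only the numerical class of $H$ and the bundle up to twist matter). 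Finally I would check consistency with $h^0(H) = 3$: indeed $h^0(\sigma + f) = h^0(\mathcal U \otimes \mathcal O_C(z)) = \deg(\mathcal U \otimes \mathcal O_C(z)) = 3$ by Riemann--Roch on the elliptic curve (the twisted bundle has degree $3 > 0$ and $h^1 = 0$), which matches $N + 1 = n + d = 2 + 3 = 5 - 2$... and this ties the description back precisely to Example~\ref{ris2}.

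The main obstacle I expect is the bundle-identification step: showing that ampleness and spannedness of a degree-$3$ rank-$2$ bundle on an elliptic curve force precisely $\mathcal U \otimes \mathcal O_C(z)$ and not, say, $\mathcal O_C(p) \oplus \mathcal O_C(q)$ or a stable indecomposable degree-$3$ bundle. The decomposable case is ruled out because then $Y = \mathbb F_0$-type would have $K_Y^2$ computed from a section of self-intersection $-1$ only if the summands are unbalanced, but both being degree one gives $\sigma^2 = 0$ and a different surface with sections of self-intersection $0$; one must argue this surface cannot carry an ample spanned $H$ with $H^2 = 3$ and $h^0(H) = 3$ realizing a triple plane — most efficiently via the numerical clash with the already-derived $b_1 = -3$, $b_2$, i.e.\ Miranda's formulas would be inconsistent. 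The indecomposable degree-$3$ case is eliminated similarly, or by noting $\sigma^2$ would be $3$ and then $\deg\mathfrak a = 0$, forcing $h^0(H) = h^0(\mathcal E) = 3$ but then $H$ fails to be spanned at the vanishing locus of the relevant section, or again by the Miranda constraint. So the plan is to let the already-computed numerics $(K_Y^2, e(Y), b_1)$ do the elimination work, reducing everything to the single nontrivial-extension bundle, and then read off $H$ by the self-intersection bookkeeping above.
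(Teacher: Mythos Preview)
Your approach---classifying the ample spanned bundle $\mathcal E=\pi_*H$ of degree $3$ directly via Atiyah---is different from the paper's and could be made to work, but the elimination steps you actually propose are wrong. The paper instead normalizes $\mathcal V$ in Hartshorne's sense (invariant $e$, $\sigma^2=-e$), writes $H\equiv\sigma+bf$, so that $H^2=-e+2b=3$ forces $e$ odd; the ampleness criteria \cite[V.2.20--2.21]{Ha} then leave only $(e,b)=(-1,1)$ or $(1,2)$. The case $(1,2)$ dies because $H\cdot\sigma=1$, so $H_\sigma$ is a degree-$1$ line bundle on the elliptic curve $\sigma$ and cannot be spanned. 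For $(e,b)=(-1,1)$ one has $\mathcal V=\mathcal U$, and spannedness of $H$ is checked via Reider's theorem.

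Your eliminations fail for concrete reasons. The $K_Y^2=0$ argument distinguishes nothing: \emph{every} ruled surface over an elliptic curve has $K_Y^2=8(1-q)=0$. You then try to \emph{rule out} the indecomposable degree-$3$ bundle, but that is exactly the desired case---twisting by a line bundle of degree $-1$ turns it into an indecomposable degree-$1$ bundle, so $\mathbb P_C(\mathcal E)\cong\mathbb P_C(\mathcal U)$. (You also write $\sigma^2=-1$ in one place and $\sigma^2=\deg\mathcal U=1$ in another; the latter is correct.) Finally, invoking ``Miranda constraints'' here is circular, since this lemma is being proved to finish Theorem~\ref{maintheorem}, where those constraints were set up. The clean salvage of your route is one line: if $\mathcal E=\mathcal L_1\oplus\mathcal L_2$ were decomposable, ampleness gives $\deg\mathcal L_i>0$ and spannedness gives $\deg\mathcal L_i\geq 2$ (a degree-$1$ line bundle on an elliptic curve has $h^0=1$ with a base point), contradicting $\deg\mathcal L_1+\deg\mathcal L_2=3$; hence $\mathcal E$ is indecomposable and Atiyah's classification gives $Y\cong\mathbb P_C(\mathcal U)$, after which your computation of $H\equiv\sigma+f$ is fine.
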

\begin{proof}
Write $Y=\mathbb P_C(\mathcal V)$, where $\mathcal V$ is a rank-2 vector bundle on $C$,
that we can suppose to be normalized as in \cite[p.\ 373]{Ha}. Denote by $\sigma$ and $f$
the tautological section and a fiber, respectively. Then $\sigma^2=-e$, where $e=- \deg \mathcal V$
is the invariant of $Y$. Since $(Y,H)$ is a scroll, up to numerical equivalence, we can write
$H = [\sigma + bf]$ for some integer $b$. Thus $H^2=-e+2b$, and then condition
$H^2=3$ gives $b=\frac{1}{2}(e+3)$, which implies that $e$ is odd. Moreover,
the ampleness conditions say that $b > e$ if $e\geq 0$ and $b\geq 0$ if $e=-1$
\cite[Propositions 2.20 and 2.21, p.\ 382]{Ha}. This, combined with the above expression of
$b$ shows that there are only two possible cases, namely:
\begin{equation}\label{2poss}
(e,b) = (-1,1) \quad{\text{\rm{or}}} \quad (1,2).
\end{equation}
In the latter case, $H=[\sigma + 2f]$ is clearly not spanned, since its restriction to the elliptic curve
$\sigma$ has degree $\deg H_{\sigma}=(\sigma + 2f) \cdot \sigma =1$. On the contrary,
in the former case, $\mathcal V=\mathcal U$ \cite[pp.\ 376--377]{Ha}
and we can check the spannedness of $H$ by using Reider's theorem \cite{R}.
Set $M=H-K_Y$, then $M=3 \sigma$, up to numerical equivalence.
In particular $M^2=9 > 5$, hence Reider's theorem applies. Suppose, by contradiction,
that $H=K_Y+M$ is not spanned; then there exists an effective divisor $D$ on $Y$ such that either
\begin{equation} \label{caso1}
D \cdot M=0 \quad \text{\rm{and}} \quad D^2=-1,
\end{equation}
or
\begin{equation} \label{caso2}
D \cdot M=1 \quad \text{\rm{and}} \quad D^2=0.
\end{equation}
Up to numerical equivalence we can write $D=x \sigma + y f$ for suitable integers $x,y$,
and then we get $D \cdot M = 3(x+y)$, while $D^2=x(2y+x)$.
Clearly, the expression of $D \cdot M$ rules out the possibility in \eqref{caso2}.
Suppose \eqref{caso1} holds. Then
$x=1$ and $y=-1$. So $D=[\sigma - f]$ and therefore $D \cdot \sigma = 0$.
However, since $e=-1$,
the elliptic curve $\sigma$ moves in an algebraic family (parameterized by the base curve $C$ itself),
sweeping out the whole surface $Y$. Thus the equality $D \cdot \sigma=0$ would imply
that $D$ cannot be effective, a contradiction.
Therefore $H$ is spanned in the former case of \eqref{2poss}.
\end{proof}


\section{Scrolls over surfaces} \label{on surfaces}
\setcounter{equation}{0}

Consider triple $n$-solids again. According to Lemma \ref{basic}, apart from scrolls over curves,
there is only one more possibility for $Y$ being a scroll for
some polarization, namely, that $n=3$ and $\dim X = 2$.  In this Section and
the following ones we focus precisely on this case,
showing that $Y$ must satisfy several restrictions. A further motivation for this study is provided by an unresolved situation
in \cite[p.\ 687]{LN3}.
So, let $\phi:Y \to \mathbb P^3$ be a triple solid, and suppose that $(Y,L)$ is a scroll over a smooth surface $X$ via $\pi:Y \to X$,
for some ample line bundle $L$.
In this case the argument at the beginning of Section 3 says that
\begin{equation}\label{setting}
(Y,H)\ \text{\rm{itself is scroll over}}\ X\ \text{\rm{via}}\ \pi,\ \text{\rm{where}}\ H=\phi^*\mathcal O_{\mathbb P^3}(1).
\end{equation}
We can thus suppose that $\mathcal E:=  \pi_* H$
is an ample and spanned rank-2 vector bundle on $X$ and $Y= \mathbb P_X(\mathcal E)$,
with tautological line bundle $H$. When we refer to \eqref{setting}, implicitly we also mean that $\mathcal E$ is as above.
In this case, since $\pi_* \mathcal O_Y= \mathcal O_X$ \cite[Proposition 7.11, p.\ 162]{Ha},
we have
\begin{equation} \label{irreg}
h^i(\mathcal O_Y)=h^i(\mathcal O_X) \quad i=0, \dots , 3
\end{equation}
\cite[Exerc.\ 4.1, p.\ 222]{Ha}. In particular, $\chi(\mathcal O_Y)=\chi(\mathcal O_X)$.

Notice that the pair $(Y,H)$ as in case (1) of Theorem \ref{maintheorem} can also be regarded as a scroll over a surface
by taking $(X,\mathcal E)=\big(\mathbb P^2, \mathcal O_{\mathbb P^2}(1)^{\oplus 2}\big)$.
We will refer to this case as the {\it{obvious case}} in the subsequent discussion.
First of all, given any smooth triple solid $\phi : Y \to \mathbb P^3$ and $H = \phi^*\mathcal O_{\mathbb P^3}(1)$,
we have $h^0(H) \geq 4$; on the other hand $H^3 = 3$, hence the $\Delta$-genus of $(Y, H)$ is
$\Delta (Y, H) = 6 - h^0(H) \leq 2$. In our setting (i.e. taking into account the additional scroll structure of $Y$),
the situation is simpler. In fact we have

\begin{proposition} \label{very ample}
Either $\Delta(Y, H) = 2$, or $\Delta(Y, H) = 0$ and $(Y, H)$ is as in the obvious case; in particular,
if $H$ is very ample, then $(Y, H)$ is as in the obvious case.
\end{proposition}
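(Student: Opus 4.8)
The plan is to exploit the fact that $(Y,H)$ is a scroll $\mathbb P_X(\mathcal E)$ over a surface $X$ with $\mathcal E$ ample and spanned of rank $2$, together with the numerical constraint $H^3 = 3$. First I would compute the relevant intersection numbers on $Y$. Writing $c_i = c_i(\mathcal E)$ (classes on $X$), the Chern--Wu relation gives $H^3 = c_1^2 - c_2$, so $c_1^2 - c_2 = 3$. Since $\mathcal E$ is ample of rank $2$ on a surface we have $c_2 > 0$ and $c_1^2 \geq 2$ (indeed $c_1^2 - c_2 > 0$ already forces positivity restrictions), so $c_1^2 = c_2 + 3 \geq 4$. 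The key quantity to control is $h^0(H) = h^0(X,\mathcal E)$; I want to show it is either $4$ (giving $\Delta = 2$) or $6$ (giving $\Delta = 0$), and rule out $h^0(H) = 5$ (i.e.\ $\Delta = 1$).

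Next I would invoke the classification of polarized scrolls of small $\Delta$-genus. A polarized surface or higher-dimensional scroll $(Y,H)$ with $\Delta(Y,H) = 0$ is, by Fujita's classification of varieties of $\Delta$-genus zero, necessarily $(\mathbb P^2 \times \mathbb P^1, \mathcal O(1,1))$ among scrolls over a surface with $H^3 = 3$ — this is exactly the obvious case, with $(X,\mathcal E) = (\mathbb P^2, \mathcal O_{\mathbb P^2}(1)^{\oplus 2})$. For $\Delta(Y,H) = 1$: here the sectional genus $g(Y,H)$ enters, and I would use that for a scroll over a surface the sectional genus equals $g(X,\det\mathcal E)$, together with Fujita's classification of polarized manifolds with $\Delta = 1$ (del Pezzo manifolds when $g=1$, and the $g=0$ case). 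The goal is to show no scroll over a surface with $H^3 = 3$ sits in the $\Delta = 1$ list: one checks that the base surface and bundle would have to satisfy $c_1^2 - c_2 = 3$ simultaneously with the numerical identities forced by $\Delta = 1$ (e.g.\ $h^0(H) = 5$ translates via Riemann--Roch on $X$ into a relation among $c_1^2$, $c_2$, $K_X \cdot c_1$, and $\chi(\mathcal O_X)$ that, combined with $c_1^2 = c_2 + 3$ and ampleness, has no solution with $X$ smooth). Here I would also use the irregularity identity \eqref{irreg}, which via Lazarsfeld's theorem forces $h^1(\mathcal O_Y) = h^1(\mathcal O_X) = 0$, pinning down $\chi(\mathcal O_X)$ enough to close the numerology.

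Finally, the very-ample assertion: if $H$ is very ample, then $\Delta(Y,H) \leq 1$ is classical for embedded varieties of small degree — in fact $\deg = H^3 = 3$ in $\mathbb P^{h^0(H)-1}$, and a nondegenerate smooth threefold of degree $3$ is a variety of minimal degree (degree $=$ codimension $+1$), hence $\Delta = 0$; combined with the first part this gives the obvious case. Alternatively I would argue directly that $H$ very ample plus $H^3 = 3$ and $Y$ nondegenerate forces $h^0(H) = 6$.

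The main obstacle I anticipate is the $\Delta = 1$ exclusion: I expect $\Delta = 0$ and the very-ample case to follow quickly from known classifications, but ruling out $\Delta(Y,H) = 1$ requires genuinely combining the scroll identities ($c_1^2 - c_2 = 3$), the genus formula for scrolls over surfaces, Fujita's $\Delta = 1$ list, and the vanishing $h^1(\mathcal O_X) = 0$ — and then checking that the resulting Diophantine-type constraints on $(X, \mathcal E)$ admit no smooth solution. Getting a clean contradiction there, rather than a messy case analysis over possible base surfaces, is where the real work lies.
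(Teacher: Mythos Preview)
Your handling of $\Delta = 0$ and of the very-ample case is fine and matches the paper's intent (though note a small slip: a smooth threefold of degree $3$ need not be of \emph{minimal} degree, since a cubic hypersurface in $\mathbb P^4$ has $\Delta = 1$; so very ampleness only gives $h^0(H) \geq 5$, i.e.\ $\Delta \leq 1$, and you still need the $\Delta = 1$ exclusion afterwards).

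The real divergence is in how you rule out $\Delta(Y,H)=1$. You propose to combine Riemann--Roch on $X$, the genus formula for scrolls, Lazarsfeld's vanishing $h^1(\mathcal O_X)=0$, and Fujita's $\Delta=1$ list into a Diophantine system on $(c_1^2,c_2,K_X\cdot c_1,\chi(\mathcal O_X))$, and you flag this as the hard part. The paper bypasses all of this with a single observation you overlooked: since $(Y,H)$ is a scroll over a surface, the Picard number satisfies $\rho(Y)\geq 2$. Now Fujita's classification of polarized manifolds with $\Delta=1$ and $H^3=3$ (Corollary~6.7 in his book) yields only the smooth cubic threefold in $\mathbb P^4$, which has $\rho=1$. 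That is the whole argument; no computation on $X$ or $\mathcal E$ is needed. Your plan could in principle be pushed through, but it replaces a one-line topological obstruction with a genuine case analysis whose termination you yourself are unsure of.
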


\begin{proof} Suppose that $\Delta(Y, H) < 2$; since $Y$ is a scroll over a surface, its Picard number is
$\rho(Y) \geq 2$, so combining this with Fujita's classification of polarized manifolds of low
$\Delta$-genus \cite[Theorem 5.10 and Corollary 6.7]{libroFujita} we immediately get what is stated.
In particular, if $H$ is very ample then $|H|$ embeds our threefold $Y$ in $\mathbb P^N$, with
$N=h^0(H)-1 \geq 4$, hence it cannot be $\Delta(Y,H)=2$.
\end{proof}

\setcounter{equation}{0}

\begin{rmk}\label{Ballico}
In particular, the fact that in the setting \eqref{setting} it can be $\Delta(Y,H)=2$ amends a result of Ballico \cite[Theorem]{Ba}
(actually, the assertion that $H^3=3$ would imply the obvious case is not proved there).
However, assuming in our setting that either $Y$ is Fano or $X=\mathbb P^2$, we will see that the obvious
case is the only possibility (cf. Proposition \ref{Fano} and Theorem \ref{conclusion}).
\end{rmk}

More generally, with an eye to the characterization of projective manifolds admitting a given variety as a hyperplane section, Proposition \ref{very ample} suggests the following.
\begin{proposition} \label{hyp sec}
Let $\mathcal X \subset \mathbb P^N$ be a regular projective $n$-fold, with $n \geq 3$. If a general
surface section $\mathcal Y$ of $\mathcal X$ is a triple plane via the hyperplane bundle map, then either
\begin{enumerate}
\item $\mathcal X \subset \mathbb P^{n+1}$ is a smooth cubic hypersurface, or
\item $n=3$ and $\mathcal X \subset \mathbb P^5$ is the Segre product $\mathbb P^2 \times \mathbb P^1$.
\end{enumerate}
\end{proposition}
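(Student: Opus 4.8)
The plan is to reduce Proposition \ref{hyp sec} to Proposition \ref{very ample} by induction on $n$, going down one hyperplane section at a time until we reach a surface or threefold. Write $L = \mathcal O_{\mathcal X}(1)$ for the hyperplane bundle, so that the general surface section $\mathcal Y \in |L|^{n-2}$ carries a triple plane structure $\psi : \mathcal Y \to \mathbb P^2$ with $\psi^*\mathcal O_{\mathbb P^2}(1) = L_{\mathcal Y}$; in particular $L_{\mathcal Y}^2 = 3$ and $h^0(\mathcal Y, L_{\mathcal Y}) \geq 4$. The first step is to run down dimensions: since $\mathcal X$ is regular, by Lefschetz/Kodaira vanishing its successive hyperplane sections are regular as well, and a general section $\mathcal Z \in |L|$ of dimension $n-1$ still has the property that its own general surface section is a triple plane (the general surface section of $\mathcal Z$ is a general surface section of $\mathcal X$). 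So it suffices to treat $n = 3$, provided we can climb back up: by the classification of manifolds having a given ample divisor as hyperplane section (Fujita, Sommese--Van de Ven) a regular variety whose general hyperplane section is a cubic hypersurface is itself a cubic hypersurface, and one whose general hyperplane section is $\mathbb P^2 \times \mathbb P^1$ (embedded Segre-wise) is again $\mathbb P^2 \times \mathbb P^1$ — because $\mathbb P^2 \times \mathbb P^1$ is a scroll and del Pezzo of the relevant type, so ``lifting'' it cannot increase the dimension here.

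For the base case $n = 3$, I would let $\psi : \mathcal Y \to \mathbb P^2$ be the triple plane on a general surface section $\mathcal Y \in |L|$ of the regular threefold $\mathcal X \subset \mathbb P^N$. The key point is that $\mathcal X$ itself should then admit a triple solid structure. Since $L_{\mathcal Y}^2 = 3$ and $L^3 = \deg \mathcal X$, consider the rational map $\varphi_L : \mathcal X \dashrightarrow \mathbb P^N$; restricted to $\mathcal Y$ it is the (finite) composition of $\psi$ with the Veronese-free projection, so $\deg \mathcal Y = 3$ under the embedding by an appropriate sub-linear-system, forcing $\deg \mathcal X = 3$ as well. A general codimension-$(N-3)$ linear projection then exhibits $\phi : \mathcal X \to \mathbb P^3$ as a finite morphism of degree $3$ with $\phi^*\mathcal O_{\mathbb P^3}(1) = L$ — a triple solid. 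Moreover this $\phi$ restricts on the general $\mathcal Y \in |L|$ to the given triple plane, so the branch divisors are compatible. At this stage $\mathcal X$ is a triple solid and we want to invoke Proposition \ref{very ample}; but that proposition has the extra hypothesis that $\mathcal X$ carries a scroll structure. Hence the heart of the argument is to show that, under these circumstances, $\mathcal X$ is either a cubic threefold or a scroll over a surface.

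To get the dichotomy, I would analyze the triple solid $\phi : \mathcal X \to \mathbb P^3$ directly. Write $\phi_*\mathcal O_{\mathcal X} = \mathcal O_{\mathbb P^3} \oplus \mathcal T$ with $\mathcal T$ the rank-2 Tschirnhaus bundle; the branch divisor lies in $|2\det\mathcal T^\vee|$ and the ramification formula gives $K_{\mathcal X} = \phi^*(K_{\mathbb P^3} + \det\mathcal T^\vee) = -4L + \phi^*\mathcal O_{\mathbb P^3}(c_1(\mathcal T^\vee))\cdot(\dots)$; more precisely $K_{\mathcal X} = (c_1(\mathcal T^\vee)-4)L$ up to the pullback of nothing since $\mathrm{Pic}(\mathbb P^3)$ is cyclic, so $K_{\mathcal X} + rL = \mathcal O_{\mathcal X}$ for $r = 4 - c_1(\mathcal T^\vee) \in \mathbb Z$ — in other words $(\mathcal X, L)$ is a \emph{del Pezzo} type variety: $K_{\mathcal X}$ is a multiple of $L$. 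Combining $L^3 = 3$ with Fujita's classification of polarized threefolds $(\mathcal X,L)$ with $-K_{\mathcal X} = rL$ and $L^3 = 3$ (the possible index values and the resulting short list, which includes the cubic threefold when $r=2$ and $\mathbb P^2 \times \mathbb P^1$, as well as low $\Delta$-genus cases) pins down $\mathcal X$. I expect the main obstacle to be exactly this step: ruling out the spurious entries of Fujita's list by using the \emph{triple-plane} structure on the general hyperplane section — for instance computing $b_1 = c_1(\mathcal T_{\mathcal Y})$ via Riemann--Hurwitz on $\psi^{-1}(\ell)$ and feeding it, together with $K_{\mathcal Y}^2$ and $e(\mathcal Y)$ (which are determined once $\mathcal X$ is on the short list), into Miranda's formulas \eqref{M form} to derive a numerical contradiction for every case except the two claimed. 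Once only the cubic threefold and $\mathbb P^2\times\mathbb P^1$ survive — the latter being a scroll over $\mathbb P^2$, so that Proposition \ref{very ample} applies and confirms it is the ``obvious case'' — the induction described above completes the proof for all $n \geq 3$.
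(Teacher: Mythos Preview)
Your $n=3$ argument contains a genuine error that invalidates the approach. You claim that the ramification formula forces $K_{\mathcal X}$ to be a multiple of $L$, writing ``$K_{\mathcal X} = (c_1(\mathcal T^\vee)-4)L$ up to the pullback of nothing since $\mathrm{Pic}(\mathbb P^3)$ is cyclic''. This is false for triple covers: the formula $K_{\mathcal X} = -4L + R$ holds, but the ramification divisor $R$ is \emph{not} in general a pullback from $\mathbb P^3$. That would mean $\phi$ is of triple section type, which the paper explicitly rules out for the Segre product in Corollary~\ref{co}. Concretely, for $\mathcal X = \mathbb P^2\times\mathbb P^1$ with $L = \mathcal O(1,1)$ one has $K_{\mathcal X} = \mathcal O(-3,-2)$, which is not a multiple of $L$. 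So the ``del Pezzo type'' reduction you base everything on collapses, and with it the appeal to Fujita's index classification.

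The paper's proof is much more direct and avoids the triple-cover machinery entirely. It takes a general threefold linear section $Z$ of $\mathcal X$ and computes its $\Delta$-genus: since $\mathcal X$ is regular, Lefschetz gives $h^0(Z,\mathcal H) = 1 + h^0(\mathcal Y,\mathcal H_{\mathcal Y}) \geq 5$ (the last inequality because $\mathcal H_{\mathcal Y}$ is \emph{very ample} on a surface, so $h^0 \geq 4$), while $\mathcal H^3 = \mathcal H_{\mathcal Y}^2 = 3$; hence $\Delta(Z,\mathcal H) \leq 1$. Fujita's classification of polarized manifolds with $\Delta \leq 1$ then yields either a cubic threefold (which extends to a cubic $n$-fold) or the Segre product (which, being a scroll over $\mathbb P^1$ of degree $3$, cannot ascend by the degree formula \eqref{grado}). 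The very-ampleness of $L$ --- which you never exploit --- is what makes this work without ever having to analyze the Tschirnhaus bundle or produce a scroll structure by hand.
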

\begin{proof} Let $Z$ be a general $3$-dimensional linear section of $\mathcal X$ and set $\mathcal H=\mathcal O_{\mathbb P^N}(1)|_Z$, so that $\mathcal Y \in |\mathcal H|$.  Clearly, $h^0(Z,\mathcal H) =1 +h^0(\mathcal Y,\mathcal H_{\mathcal Y})$ since $h^1(\mathcal  O_Z)=h^1(\mathcal O_{\mathcal X})=0$, by the Lefschetz theorem. Hence
\begin{equation}
\Delta(Z, \mathcal H)=3+\mathcal H^3-h^0(Z, \mathcal H) = 3+\mathcal H_{\mathcal Y}^2 -1 -h^0(\mathcal Y, \mathcal H_{\mathcal Y}) \leq 1,  \nonumber
\end{equation}
since $\mathcal H_{\mathcal Y}^2=3$ and $h^0(\mathcal Y, \mathcal H_{\mathcal Y}) \geq 4$, $\mathcal H_{\mathcal Y}$ being a very ample divisor on $\mathcal  Y$. If $\Delta(Z,\mathcal H)$=1, then $Z \subset \mathbb P^4$ is a smooth cubic threefold by \cite [Corollary 6.7]{libroFujita} and then $\mathcal X$ is as in (1). On the other hand, if $\Delta (Z,\mathcal H)=0$, then $Z$ is the Segre product $\mathbb P^2 \times \mathbb P^1 \subset \mathbb P^5$, which, however, cannot ascend to higher dimensions. Actually, $\mathcal X$ is a scroll over $\mathbb P^1$ and then \eqref{grado} shows that $n=3$, i.e, $\mathcal X = Z$, as in (2).
\end{proof}

\medskip

From now on we will assume that our triple solid $Y$ has the additional structure of a scroll
over a smooth surface. So we will always refer to the setting \eqref{setting}.

\setcounter{theorem}{0}
\setcounter{equation}{2}
The structure of triple solid given by $\phi$, combined with the Chern--Wu relation implies:
\begin{equation}\label{Hcube}
3 = H^3 = c_1(\mathcal E)^2 - c_2(\mathcal E)
\end{equation}
$c_i(\mathcal E)$ denoting the $i$-th Chern class of $\mathcal E$.
So we have

\setcounter{theorem}{3}
\setcounter{equation}{0}

\begin{rmk}\label{rem0}
$\mathcal E$ is Bogomolov stable unless $(Y,H)$ is as in the obvious case.
Actually, \eqref{Hcube} says that $c_1(\mathcal E)^2-4c_2(\mathcal E)=3\big(1-c_2(\mathcal E)\big) \leq 0$,
because $c_2(\mathcal E)>0$ due to the ampleness of $\mathcal E$ \cite{BG};
therefore $\mathcal E$
is Bogomolov semistable. Moreover it is properly semistable if and only if $c_2(\mathcal E)=1$
and this occurs only for $(X,\mathcal E)=\big(\mathbb P^2, \mathcal O_{\mathbb P^2}(1)^{\oplus 2}\big)$ by \cite{LS}.
Hence, apart from the obvious case, $\mathcal E$ is Bogomolov stable.
\end{rmk}

Here we collect some properties of $Y$.

\begin{proposition} \label{prelim}
We have:
\begin{enumerate}
\item[(a)] $h^1(\mathcal O_Y)=0$;
\item[(b)] $X$ is a regular surface;
\item[(c)] a general element $S$ in the linear subsystem $\phi^*|\mathcal O_{\mathbb P^3}(1)| \subseteq |H|$
is a smooth regular surface;
\item[(d)] the ramification divisor $R$ of $\phi$ is very ample;
\item[(e)] $(Y,R)$ is a conic fibration over $X$ via $\pi$, with empty discriminant locus. In particular,
letting $P:=\mathbb P_X(\mathcal F)$, where $\mathcal F=\pi_*R$ and denoting by $\xi$ the
tautological line bundle and by $\widetilde{\pi}:P \to X$ the bundle projection, $Y$ is contained
in $P$ as a smooth divisor of relative degree $2$, belonging to the linear system
$|2\xi - 2 \widetilde{\pi}^*(K_X+ 2 \det \mathcal E)|$ and $\xi_Y=R$.
\end{enumerate}
\end{proposition}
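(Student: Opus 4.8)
The plan is to establish the five assertions by leveraging the scroll structure $Y=\mathbb{P}_X(\mathcal{E})$ together with the triple-cover structure given by $\phi$, using the Tschirnhaus/ramification formalism and the topological consequences of Lazarsfeld's theorem. For (a): by \cite[Theorem 1]{Laz1} the finite morphism $\phi:Y\to\mathbb{P}^3$ induces an isomorphism on $H^i(-,\mathbb{C})$ for $i\le \dim Y-1=2$, hence $0=H^1(\mathbb{P}^3,\mathbb{C})\cong H^1(Y,\mathbb{C})$, giving $h^1(\mathcal{O}_Y)=0$. Then (b) is immediate from \eqref{irreg}, since $h^1(\mathcal{O}_X)=h^1(\mathcal{O}_Y)=0$. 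For (c), a general member $S$ of the subsystem $\phi^*|\mathcal{O}_{\mathbb{P}^3}(1)|$ is the pullback $\phi^{-1}(\Pi)$ of a general hyperplane $\Pi\cong\mathbb{P}^2$; its smoothness follows from generic smoothness (Bertini applied downstairs, together with the fact that $\phi$ is finite and the branch divisor is reduced), and its regularity follows either from the Lefschetz-type argument applied to $\phi|_S:S\to\mathbb{P}^2$ (again Lazarsfeld, now in the surface case $h^1(S,\mathbb{C})\cong h^1(\mathbb{P}^2,\mathbb{C})=0$), or from the exact sequence $0\to\mathcal{O}_Y(-H)\to\mathcal{O}_Y\to\mathcal{O}_S\to 0$ together with (a) and the vanishing of $h^2(\mathcal{O}_Y(-H))$ (Kodaira, $H$ ample).

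For (d) and (e) I would compute the ramification divisor explicitly. Write $\phi_*\mathcal{O}_Y=\mathcal{O}_{\mathbb{P}^3}\oplus\mathcal{T}$ with $\mathcal{T}$ the rank-$2$ Tschirnhaus bundle, so the branch divisor lies in $|{-2\det\mathcal{T}}|=|2\det\mathcal{T}^{\vee}|$ by \cite[Proposition 4.7]{Mi3ple}. The ramification formula gives $K_Y=\phi^*K_{\mathbb{P}^3}+R=-4H+R$; comparing with the scroll expression $K_Y=-2H+\pi^*(K_X+\det\mathcal{E})$ (which holds since $Y=\mathbb{P}_X(\mathcal{E})$ with tautological bundle $H$) yields
\begin{equation}
R=2H+\pi^*(K_X+\det\mathcal{E}). \nonumber
\end{equation}
Restricting to a fiber $F\cong\mathbb{P}^1$ of $\pi$ gives $R\cdot F=2$, so $(Y,R)$ has the numerical shape of a conic fibration over $X$; since $Y$ itself is smooth and $\pi$ is a $\mathbb{P}^1$-bundle, every fiber $F$ maps $2$-to-$1$ to $\mathbb{P}^3$ and $R|_F\in|\mathcal{O}_{\mathbb{P}^1}(2)|$, so $\widetilde\pi:\mathbb{P}_X(\mathcal{F})\to X$ with $\mathcal{F}=\pi_*R$ (a rank-$3$ bundle, by cohomology and base change since $h^0(\mathbb{P}^1,\mathcal{O}(2))=3$ and $h^1=0$) has no degenerate fibers, i.e. empty discriminant. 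The very ampleness of $R$ in (d) should follow from the very ampleness of $\xi$ on $P=\mathbb{P}_X(\mathcal{F})$ after checking $\mathcal{F}$ is suitably positive, or more directly: $R=\phi^*\mathcal{O}_{\mathbb{P}^3}(2)\otimes\pi^*(\text{something})$ — actually since $R+2H = 4H + \pi^*(K_X+\det\mathcal E)$ and one knows the branch locus is a quartic (because the triple plane induced on $S$ has branch a curve of degree equal to $-2b_1$ via Riemann–Hurwitz, pinning down $\det\mathcal{T}$), one can identify $K_X+\det\mathcal E$ numerically and conclude $R$ is a sum of an ample and a spanned bundle in a way that forces very ampleness; I would instead prove very ampleness of $R$ by embedding $Y\hookrightarrow P$ via the relative conic and showing $\xi$ restricts very amply, using that $P$ is a projective bundle over the (regular, hence well-behaved) surface $X$ and that $\xi_Y=R$.

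The identification $Y\in|2\xi-2\widetilde\pi^*(K_X+2\det\mathcal{E})|$ inside $P=\mathbb{P}_X(\mathcal{F})$ is the computational heart. Here $\mathcal{F}=\pi_*R=\pi_*\big(2H+\pi^*(K_X+\det\mathcal{E})\big)=(\pi_*\mathcal{O}_Y(2H))\otimes(K_X+\det\mathcal{E})=S^2\mathcal{E}\otimes(K_X+\det\mathcal{E})$ by the projection formula and the standard computation of $\pi_*\mathcal{O}(2H)=S^2\mathcal{E}$. The divisor class of $Y\subset P$ as a relative conic is $2\xi+\widetilde\pi^*\mathcal{D}$ for some line bundle $\mathcal{D}$ on $X$; one pins down $\mathcal{D}$ by the adjunction/canonical-bundle bookkeeping on $P$ — namely $K_P=-3\xi+\widetilde\pi^*(K_X+\det\mathcal{F})$ and $K_Y=(K_P+Y)|_Y$ must agree with $-4H+R=-2H+\pi^*(K_X+\det\mathcal{E})$ under the identification $\xi_Y=R=2H+\pi^*(K_X+\det\mathcal E)$. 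Solving this linear equation in $\mathrm{Pic}(X)$, using $\det\mathcal{F}=\det(S^2\mathcal{E})\otimes 3(K_X+\det\mathcal{E})=3\det\mathcal{E}\otimes 3(K_X+\det\mathcal{E})$, should yield exactly $\mathcal{D}=-2(K_X+2\det\mathcal{E})$, i.e. $Y\in|2\xi-2\widetilde\pi^*(K_X+2\det\mathcal{E})|$, and simultaneously confirm $\xi_Y=R$. The main obstacle is keeping the several twists straight: one must be careful that "$\xi$" is the tautological bundle of $\mathbb{P}_X(\mathcal{F})$ in Grothendieck's convention with $\pi_*R$ and not its dual, and that $\pi_*\mathcal{O}_Y(2H)=S^2\mathcal{E}$ rather than $S^2\mathcal{E}^{\vee}$ or a twist thereof; once the conventions are fixed, (d) and (e) follow from the explicit class together with the positivity already recorded in Remark \ref{rem0} and the regularity of $X$ from (b).
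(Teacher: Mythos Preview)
Your arguments for (a), (b), (c) are essentially the same as the paper's (the paper cites Sommese's Lefschetz-type result \cite{So} for the regularity of $S$, but either of your alternatives works). Your approach to (e) is also sound, though it differs from the paper's: you determine the class $\mathcal{D}$ of $Y$ in $P$ by adjunction, matching $K_Y=(K_P+Y)|_Y$ against $K_Y=-2H+\pi^*(K_X+\det\mathcal{E})$ using $\xi_Y=R$; the paper instead uses the empty-discriminant formula $2c_1(\mathcal{F})+3\mathcal{B}=\mathcal{O}_X$ from \cite[p.~76]{BOSS}. Both computations yield $\mathcal{B}=\mathcal{D}=-2(K_X+2\det\mathcal{E})$, so this part is correct by a genuinely different route.

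The real gap is (d). None of your three sketches establishes that $R$ is very ample. Showing $\xi$ very ample on $P$ would require $\mathcal{F}=S^2\mathcal{E}\otimes(K_X+\det\mathcal{E})$ to be very ample, but at this stage you do not even know that $K_X+\det\mathcal{E}$ is ample (that is the content of Proposition~\ref{A}, which comes \emph{later}), and even ampleness and spannedness of that twist would not by itself force $\mathcal{F}$ to be very ample. The aside that ``the branch locus is a quartic'' is incorrect in general: $b=2g+4$ by \eqref{b}, and $g$ is not yet determined (it equals $0$ only in the obvious case). The paper's argument is a one-line citation: writing $R=K_Y+4H$ from the ramification formula, and noting that $H$ is ample and spanned with $H^3=3$, one invokes \cite[Theorem~3.1]{LPS} directly to conclude that $K_Y+4H$ is very ample. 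This adjunction-theoretic input is what your proposal is missing.
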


\begin{proof}
(a) follows from \cite[Theorem 1]{Laz1}, and then equation \eqref{irreg} implies (b). As $H$ is ample
and $\phi^*|\mathcal O_{\mathbb P^3}(1)|$ is base-point free, its general element $S$ is a smooth
surface by the Bertini theorem: the fact that $h^1(\mathcal O_S)=0$ follows from the
Lefschetz theorem \cite{So}. This proves (c). The ramification formula says that
\begin{equation}
K_Y= \phi^*K_{\mathbb P^3} + R = -4H + R,  \nonumber
\end{equation}
hence $R=K_Y+4H$. Since $H$ is ample and spanned with $H^3=3$ it thus follows from \cite[Theorem 3.1]{LPS}
that $R$ is a very ample divisor. This gives (d). Finally, by the canonical bundle formula, we have
\begin{equation}
K_Y = -2H + \pi^*(K_X+\det \mathcal E),  \nonumber
\end{equation}
and by comparing the two expressions of $K_Y$ we get the relation
\begin{equation}
R = 2H + \pi^*(K_X+\det \mathcal E).   \nonumber
\end{equation}
The first assertion in (e) follows from the fact that $R$ restricts to every fiber of $\pi$ as
$\mathcal O_{\mathbb P^1}(2)$: the discriminant is empty since every fiber is irreducible.
Furthermore, as a conic fibration over $X$, $Y$ is contained as a smooth divisor of relative
degree $2$ inside $P:=\mathbb P_X(\mathcal F)$, where $\mathcal F=\pi_*R$;
more precisely, letting $\xi$ denote the tautological line bundle and $\widetilde{\pi}:P \to X$
the bundle projection extending $\pi$, we have that $Y \in |2\xi + \widetilde{\pi}^*\mathcal B|$
for some line bundle $\mathcal B$ on $X$ and $\xi_Y=R$.
Recalling that $\pi_*H = \mathcal E$, from the expression of $R$ we get
\begin{equation}
\mathcal F = \pi_*\big(2H+\pi^*(K_X+\det \mathcal E)\big) = S^2\mathcal E \otimes (K_X+\det \mathcal E),  \nonumber
\end{equation}
where $S^2$ stands for the second symmetric power. Since $\text{\rm{rk}}(\mathcal F)=3$, this gives
\begin{equation}
c_1(\mathcal F)=3  c_1(\mathcal E)+3(K_X+\det \mathcal E) = 3(K_X + 2 \det \mathcal E).  \nonumber
\end{equation}
The condition expressing the fact that the discriminant locus of $(Y,R)$ is empty is given by
$2c_1(\mathcal F)+3\mathcal B = \mathcal O_X$ \cite[p.\ 76]{BOSS}.
Therefore we get $\mathcal B=- \frac{2}{3}c_1(\mathcal F) =-2(K_X+2 \det \mathcal E)$, and this concludes the proof.
\end{proof}

\setcounter{equation}{0}

\begin{proposition} \label{A}
Suppose that $(Y,H)$ is not as in the obvious case.
Then $K_X+\det \mathcal E$ is ample and spanned.
\end{proposition}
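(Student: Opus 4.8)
My plan is to set $D:=K_X+\det\E$, the line bundle appearing in the canonical bundle formula $K_Y=-2H+\pi^*D$ of Proposition~\ref{prelim}, and to prove that $D$ is ample and spanned by invoking the classification of ample and globally generated rank-$2$ vector bundles on a smooth projective surface whose first adjoint bundle fails to be ample and spanned, and then eliminating each exceptional pair $(X,\E)$ by means of the numerical identity \eqref{Hcube}. Apart from the obvious case $(\pd,\Ol_\pd(1)^{\oplus2})$ — where $D=\Ol_\pd(-1)$ is not even effective — the pairs $(X,\E)$ for which $D$ is not ample and spanned are: (i) $(\pd,\Ol_\pd(1)\oplus\Ol_\pd(2))$; (ii) $(\pd,T_\pd)$; and (iii) $X$ a geometrically ruled surface $\rho\colon X\to C$ over a smooth curve $C$ with $\E_f\cong\Ol_f(1)^{\oplus2}$ for every fibre $f$ of $\rho$ (in particular $(\pu\times\pu,\Ol_{\pu\times\pu}(1,1)^{\oplus2})$ belongs here). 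Since the obvious case is excluded by hypothesis, it suffices to show that none of (i)--(iii) is compatible with $c_1(\E)^2-c_2(\E)=3$.

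Cases (i) and (ii) are disposed of at once: there $c_1(\E)^2-c_2(\E)$ equals $9-2=7$ and $9-3=6$ respectively, neither of which is $3$, contradicting \eqref{Hcube}. For case (iii) I would argue as follows. Pick a section $\sigma_0$ of $\rho$ and set $e:=-\sigma_0^2$, so that $\sigma_0\cdot f=1$ and $f^2=0$. From $\Ol_X(\sigma_0)|_f\cong\Ol_f(1)$ and $\E_f\cong\Ol_f(1)^{\oplus2}$ we get $\big(\E\otimes\Ol_X(-\sigma_0)\big)|_f\cong\Ol_f^{\oplus2}$ for every fibre; hence, by cohomology and base change, $\G:=\rho_*\big(\E\otimes\Ol_X(-\sigma_0)\big)$ is a rank-$2$ bundle on $C$ and the evaluation map $\rho^*\G\to\E\otimes\Ol_X(-\sigma_0)$ is an isomorphism, i.e. $\E\cong\rho^*\G\otimes\Ol_X(\sigma_0)$. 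Writing $g:=\deg\G$ and using $\rho^*c_1(\G)\equiv gf$ together with $c_2(\rho^*\G)=0$, a short computation gives $c_1(\E)\equiv2\sigma_0+gf$ and $c_2(\E)=g-e$, whence
\[
c_1(\E)^2-c_2(\E)=4(g-e)-(g-e)=3(g-e).
\]
Therefore \eqref{Hcube} forces $g-e=1$, so $c_2(\E)=1$, and then Remark~\ref{rem0} forces $(X,\E)$ to be the obvious case — a contradiction, since in case (iii) $X$ is ruled over a curve whereas $\pd$ is not. Hence (iii) cannot occur, and we conclude that $D=K_X+\det\E$ is ample and spanned.

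The genuinely delicate point is not any of the above computations but the completeness of the list (i)--(iii): one must secure the classification of the failure of ampleness \emph{and} of global generation of $K_X+\det\E$ for $\E$ ample and spanned of rank $2$ on a surface (note that the ampleness of $\E$ is what forces, in case (iii), $X$ to be a genuine $\pu$-bundle rather than merely to carry a $\pu$-fibration, since a reducible fibre would have a rational component on which $\E$ has degree at most $1$). If one prefers a self-contained route, nefness of $D$ follows from the much more elementary fact that for a rank-$2$ ample bundle on a surface the only pair with $K_X+\det\E$ not nef is the obvious case; once $D$ is known to be nef and big, its spannedness can then be extracted from Reider's theorem applied on $X$, at the cost of some additional casework.
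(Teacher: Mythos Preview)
Your proof is correct and follows the same route as the paper's: invoke the classification of pairs $(X,\mathcal E)$ for which $K_X+\det\mathcal E$ fails to be ample (the paper cites \cite[Main Theorem]{Fu2} and lists $\big(\mathbb Q^2,\mathcal O_{\mathbb Q^2}(1)^{\oplus 2}\big)$ separately rather than as a subcase of your (iii)), eliminate each case via \eqref{Hcube} together with the $c_2(\mathcal E)=1$ characterization of the obvious case from \cite{LS}, and then obtain spannedness from \cite[Theorem A]{LM}. The only differences are cosmetic: you fold the quadric case into (iii) and bundle ampleness and spannedness into a single classification to be cited, whereas the paper handles the two properties sequentially.
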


\begin{proof}
Suppose that $K_X + \det\mathcal E$ is not ample. Then, according to \cite[Main Theorem]{Fu2},
$(X,\mathcal E)$ is one of the following pairs:
\begin{enumerate}
\item[(a)] $X$ is a $\mathbb P^1$-bundle over a smooth curve $C$ and
$\mathcal E_f=\mathcal O_{\mathbb P^1}(1)^{\oplus 2}$, for every fiber of the bundle projection $p:X \to C$;
\item[(b)] $\big(\mathbb P^2, \mathcal O_{\mathbb P^2}(2)\oplus \mathcal O_{\mathbb P^2}(1)\big)$;
\item[(c)] $(\mathbb P^2, T_{\mathbb P^2})$ (tangent bundle);
\item[(d)] $\big(\mathbb Q^2, \mathcal O_{\mathbb Q^2}(1)^{\oplus 2}\big)$.
\end{enumerate}
Note that the right hand term in equality \eqref{Hcube} is equal to $7$ in case (b) and $6$ in cases (c) and (d),
a contradiction. In case (a) we can set $X=\mathbb P_C(\mathcal V)$ where $\mathcal V$ is a rank-$2$
vector bundle over $C$ of degree $v:=\deg \mathcal V$, and up to a twist by a line bundle we can suppose
that $v=0$ or $-1$ according to whether it is even or odd, respectively; moreover, letting $\xi$ denote
the tautological line bundle and $p:X \to C$ the projection we have $\mathcal E = \xi \otimes \pi^* \mathcal G$
for some rank-2 vector bundle $\mathcal G$ on $C$. Set $\gamma:=\deg \mathcal G$. Then $\xi^2=v$,
$c_1(\mathcal E)^2 = (2 \xi + \gamma f)^2 = 4(v +\gamma)$, and
$c_2(\mathcal E) = \xi^2 + \gamma = v + \gamma$. Then equality \eqref{Hcube} gives $v + \gamma =1$.
But $c_2(\mathcal E)=1$ implies that $(X,\mathcal E)=\big(\mathbb P^2, \mathcal O_{\mathbb P^2}(1)^{\oplus 2}\big)$
by \cite{LS}. Thus $(Y,H)$ is as in the obvious case, a contradiction. Therefore
$K_X + \det\mathcal E$ is ample. Moreover,  it is also spanned in view of
\cite[Theorem A]{LM}, since $\mathcal E=\pi_*H$ is ample and spanned.
\end{proof}

\medskip
Recall that the triple cover $\phi: Y \to \mathbb P^3$ is said to be of triple section type if $Y$ is contained
in the total space of an ample line bundle on $\mathbb P^3$ as a triple section \cite{Fu0}.
As a consequence of Proposition \ref{A} we get the following conclusion (compare with \cite[Proposition 4.4]{LN3}).

\begin{corollary} \label{co}
$\phi$ is not of triple section type. In particular, $\phi$
is not a cyclic cover.
\end{corollary}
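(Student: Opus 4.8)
The plan is to deduce both assertions of Corollary~\ref{co} directly from Proposition~\ref{A}, via the structure theory of triple covers recalled earlier. First I would recall what "triple section type" means concretely: $\phi:Y\to\mathbb P^3$ is of triple section type precisely when $Y$ embeds in the total space of some ample line bundle $\mathcal A$ on $\mathbb P^3$ as a divisor meeting the general fibre in three points, i.e. when the Tschirnhaus bundle $\mathcal T$ splits as $\mathcal T=\mathcal A^{-1}\oplus\mathcal A^{-2}$ (equivalently $\mathcal T^\vee$ is a direct sum of two line bundles one of which is the square of the other; see \cite{Fu0,Mi3ple}). So the first step is to examine the Tschirnhaus bundle $\mathcal T$ of $\phi$ in our scroll setting \eqref{setting}.

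The key computation is to identify $\mathcal T$ in terms of the data $(X,\mathcal E)$. Since $\phi_*\mathcal O_Y=\mathcal O_{\mathbb P^3}\oplus\mathcal T$ and $\phi$ is finite of degree $3$, the branch divisor lies in $|2\det\mathcal T^\vee|$; from the ramification formula $K_Y=-4H+R$ together with the canonical bundle formula $K_Y=-2H+\pi^*(K_X+\det\mathcal E)$ (both already derived in Proposition~\ref{prelim}) one gets $R=2H+\pi^*(K_X+\det\mathcal E)$, and pushing forward via $\phi$ gives the branch divisor of $\phi$ as an element of $|\mathcal O_{\mathbb P^3}(k)|$ where $k$ is read off from $R$. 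Comparing $3=H^3$ and the Chern class identities, one finds that $\det\mathcal T^\vee=\mathcal O_{\mathbb P^3}(2)$, so $b_1:=c_1(\mathcal T)=-2$, consistent with \eqref{b1}-type relations. If $\phi$ were of triple section type, then $\mathcal T=\mathcal O_{\mathbb P^3}(-1)\oplus\mathcal O_{\mathbb P^3}(-1)$ would be the only splitting compatible with $\det\mathcal T=\mathcal O_{\mathbb P^3}(-2)$ of the required form $\mathcal A^{-1}\oplus\mathcal A^{-2}$ — but $\mathcal A^{-1}\oplus\mathcal A^{-2}$ with $\mathcal A=\mathcal O_{\mathbb P^3}(t)$ forces $-t-2t=-2$, impossible for integral $t$. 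Hence $\phi$ cannot be of triple section type; and since every cyclic triple cover is of triple section type (the cyclic cover associated to a line bundle $\mathcal A$ with $3\mathcal A=$ branch data sits inside the total space of $\mathcal A$), it follows a fortiori that $\phi$ is not cyclic.

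Alternatively — and this is the route I expect the authors take given the explicit reference to Proposition~\ref{A} — one argues via the base. If $\phi$ were of triple section type, a result of Fujita \cite{Fu0} (cf.\ \cite[Proposition 4.4]{LN3}) shows that $Y$ is then a very special kind of variety; in particular such $Y$ are del Pezzo or have $-K_Y$ large in a way that forces $K_X+\det\mathcal E$ to fail to be ample, or forces $(Y,H)$ into the obvious case. More precisely, for a triple section $Y\subset$ total space of $\mathcal O_{\mathbb P^3}(t)$ one has $-K_Y=(4-3t)H$ restricted appropriately; combined with $K_Y=-2H+\pi^*(K_X+\det\mathcal E)$ this yields $\pi^*(K_X+\det\mathcal E)=(3t-2)H$ on $Y$, which is impossible when $t\ge1$ since the left side is a pullback from $X$ hence not a positive multiple of the tautological $H$, and impossible when $t\le0$ since then $-K_Y$ would be too positive (contradicting ampleness of $K_X+\det\mathcal E$ from Proposition~\ref{A}), unless we are in the obvious case which is excluded there. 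The cyclic statement then follows since cyclic triple covers are automatically of triple section type.

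The main obstacle is the bookkeeping in identifying $\mathcal T$ and its admissible splittings: one must be careful that "$\det\mathcal T^\vee=\mathcal O_{\mathbb P^3}(2)$" genuinely follows in \emph{our} setting \eqref{setting} and not just in the obvious case, and one must invoke the correct characterization of triple section type (the $\mathcal A^{-1}\oplus\mathcal A^{-2}$ shape of $\mathcal T$) from \cite{Fu0} rather than a weaker statement. Once the shape of $\mathcal T$ is pinned down, the arithmetic obstruction $-3t=-2$ is immediate, and the passage "cyclic $\Rightarrow$ triple section type" is standard, so the corollary drops out with essentially no further work beyond citing Proposition~\ref{A}.
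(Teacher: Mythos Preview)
Your first approach contains a genuine error. You claim $\det\mathcal T^\vee=\mathcal O_{\mathbb P^3}(2)$, but this is false outside the obvious case. The degree of the branch divisor on a hyperplane is governed by \eqref{b}: $b=2g+4$ where $g=g(Y,H)$, and by Proposition~\ref{prop 4} we have $g\geq 3$ unless $(Y,H)$ is obvious. Hence $c_1(\widetilde{\mathcal T})=-(g+2)h$ with $g+2\geq 5$, not $-2h$. Your arithmetic obstruction then becomes $3t=g+2$, which \emph{does} admit integer solutions (e.g.\ $g=4$, $t=2$), so the argument collapses. The claim that ``pushing forward $R$ via $\phi$'' reads off the branch class is also unjustified: $R=2H+\pi^*(K_X+\det\mathcal E)$ is not $\phi^*$ of anything in general, precisely because the second summand comes from $X$, not from $\mathbb P^3$.

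Your second approach is essentially the paper's argument, but obscured. The paper proceeds more cleanly: triple section type implies $K_Y=kH$ for some integer $k$ (this is \cite[Proposition 3.2]{Laz1}, cf.\ \cite[Theorem 2.1]{Fu0}); comparing with $K_Y=-2H+\pi^*(K_X+\det\mathcal E)$ gives $(k+2)H=\pi^*(K_X+\det\mathcal E)$, and since $H$ and $\pi^*\mathrm{Pic}(X)$ are independent generators of $\mathrm{Pic}(Y)$ this forces $k=-2$ and $K_X+\det\mathcal E=\mathcal O_X$. This contradicts Proposition~\ref{A}, and the paper notes separately that $K_X+\det\mathcal E=\mathcal O_{\mathbb P^2}(-1)\neq\mathcal O_X$ in the obvious case as well. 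Your version introduces an unnecessary case split on $t$ (only $t\geq 1$ is relevant since $\mathcal A$ is ample), computes a specific formula $-K_Y=(4-3t)H$ that needs its own justification, and leaves the obvious case dangling. Drop the Tschirnhaus route entirely and streamline the second approach to the two-line comparison above.
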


\begin{proof} If $\phi$ is of triple section type, then $K_Y = \phi^*\mathcal O_{\mathbb P^3}(k)=kH$
for some integer $k$, \cite[Proposition 3.2]{Laz1} (see also \cite[Theorem 2.1]{Fu0}).
Taking into account the canonical bundle formula we thus get $kH=-2H+\pi^*(K_X+ \det \mathcal E)$.
Therefore $k=-2$ and $K_X + \det \mathcal E=\mathcal O_X$, due to the injectivity of the homomorphism
$\pi^*: \text{\rm{Pic}}(X) \to \text{\rm{Pic}}(Y)$.
This conclusion, however, contradicts Proposition \ref{A}. Note also that it is not satisfied
even when $(Y,H)$ is as in the obvious case.
\end{proof}

\setcounter{equation}{0}

\begin{proposition} \label{E decomp}
If $\mathcal E$ fits into an exact sequence
\begin{equation}
0 \to M \to \mathcal E \to N \to 0, \nonumber
\end{equation}
where $M$ and $N$ are ample line bundles, then $(Y,H)$ can only be as in the obvious case.
In particular, except for that case, $\mathcal E$ is indecomposable.
\end{proposition}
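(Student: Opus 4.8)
The plan is to read off the Chern classes of $\mathcal E$ from the extension, feed them into \eqref{Hcube}, and then use the positivity of ample divisors on a surface to pin down $c_2(\mathcal E)$.

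First I would note that multiplicativity of the total Chern class applied to $0 \to M \to \mathcal E \to N \to 0$ gives $c_1(\mathcal E) = M + N$ and $c_2(\mathcal E) = M \cdot N$. Substituting in \eqref{Hcube},
\[
3 = c_1(\mathcal E)^2 - c_2(\mathcal E) = (M+N)^2 - M\cdot N = M^2 + M\cdot N + N^2 .
\]
Because $M$ and $N$ are ample on the smooth surface $X$, all three intersection numbers $M^2$, $M\cdot N$, $N^2$ are strictly positive integers (for the mixed term, a sufficiently high multiple of $N$ is linearly equivalent to an effective curve, on which the ample $M$ has positive degree); hence the right-hand side is at least $3$, and equality forces $M^2 = M\cdot N = N^2 = 1$. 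In particular $c_2(\mathcal E) = 1$.

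Now I would invoke the classification of ample rank-$2$ bundles of second Chern class one: by \cite{LS} (as already recalled in Remark \ref{rem0}), $c_2(\mathcal E)=1$ forces $(X,\mathcal E) = \big(\mathbb P^2, \mathcal O_{\mathbb P^2}(1)^{\oplus 2}\big)$, which is precisely the obvious case; equivalently, one may observe that the sub-line bundle $M$ satisfies $(2M - c_1(\mathcal E))^2 = (M-N)^2 = 0$, so $\mathcal E$ is not Bogomolov stable, and conclude via Remark \ref{rem0}. For the final assertion, a decomposition $\mathcal E = L_1 \oplus L_2$ into line bundles exhibits each $L_i$ as a quotient of the ample bundle $\mathcal E$, hence ample, so the split sequence $0 \to L_1 \to \mathcal E \to L_2 \to 0$ meets the hypothesis and $(Y,H)$ is again the obvious case; contrapositively, away from that case $\mathcal E$ is indecomposable. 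There is no real obstacle here: the only non-formal ingredient is the cited structure result \cite{LS} for ample rank-$2$ bundles with $c_2 = 1$, everything else being a short numerical computation.
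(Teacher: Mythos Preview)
Your argument is correct and matches the paper's up to the key step: both derive $3 = M^2 + M\cdot N + N^2$ from \eqref{Hcube} and conclude that each term equals $1$. The only difference is the finish: you invoke \cite{LS} (equivalently Remark \ref{rem0}) on $c_2(\mathcal E)=1$, whereas the paper proceeds more self-containedly, using the Hodge index theorem (from $(M-N)\cdot M=(M-N)^2=0$ one gets $M\equiv N$), then the spannedness of the quotient $N$ with $N^2=1$ to identify $(X,N)=(\mathbb P^2,\mathcal O_{\mathbb P^2}(1))$, and finally $\mathrm{Ext}^1(N,M)=H^1(\mathbb P^2,\mathcal O_{\mathbb P^2})=0$ to split the extension. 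Your route is shorter and entirely legitimate, since \cite{LS} is already invoked elsewhere in the paper; the paper's route avoids that external input at this particular step.
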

\begin{proof} Assuming that $\mathcal E$ fits into an exact sequence as above, we have that
$c_1(\mathcal E)=M+N$ and $c_2(\mathcal E)=M \cdot N$. Thus \eqref{Hcube} becomes
$$3 = H^3 = c_1(\mathcal E)^2-c_2(\mathcal E) = M^2 + M \cdot N + N^2,$$
and  $M^2=M \cdot N=N^2=1$, because both $M$ and $N$ are ample. But then $(M-N) \cdot M=0$ and
$(M-N)^2=0$, hence the Hodge index theorem implies that $M$ and $N$ are numerically equivalent.
As $\mathcal E$ is spanned, $N$ is spanned too and then $(X,N)$ is a surface polarized by an ample
and spanned line bundle with $N^2=1$. Therefore $X=\mathbb P^2$ and
$M=N=\mathcal O_{\mathbb P^2}(1)$; then $\mathcal E=M \oplus N$ since $\text{Ext}^1(N,M)=
H^1(\mathbb P^2, \mathcal O_{\mathbb P^2})=0$.
\end{proof}

\setcounter{equation}{0}

Here is a consequence of Proposition \ref{E decomp}.

\begin{corollary} \label{c_2(E)}
If $(Y,H)$ is not as in the obvious case, then $c_2(\mathcal E) \geq 3$.
\end{corollary}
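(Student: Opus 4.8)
The plan is to deduce the bound $c_2(\mathcal E) \geq 3$ from the already established facts about $\mathcal E$, working through the first few small values and ruling each one out. Recall from \eqref{Hcube} that $c_1(\mathcal E)^2 = c_2(\mathcal E) + 3$, and that $c_2(\mathcal E) > 0$ by the ampleness of $\mathcal E$ (cf. Remark \ref{rem0}). So the only cases to exclude, assuming $(Y,H)$ is not as in the obvious case, are $c_2(\mathcal E) = 1$ and $c_2(\mathcal E) = 2$.

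First I would dispose of $c_2(\mathcal E) = 1$: this is precisely the properly Bogomolov-semistable situation, and by \cite{LS} (as already invoked in Remark \ref{rem0} and in the proof of Proposition \ref{A}) it forces $(X,\mathcal E) = \big(\mathbb P^2, \mathcal O_{\mathbb P^2}(1)^{\oplus 2}\big)$, i.e. the obvious case, contrary to hypothesis. Next, for $c_2(\mathcal E) = 2$ we have $c_1(\mathcal E)^2 = 5$ by \eqref{Hcube}. Here I would use the classification of rank-$2$ ample vector bundles with small $c_2$: by Remark \ref{rem0}, $\mathcal E$ is Bogomolov stable in this case, and an ample spanned rank-$2$ bundle on a surface with $c_2(\mathcal E) = 2$ is very constrained. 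The key observation is that a general section of $\mathcal E$ vanishes on a $0$-dimensional subscheme $Z$ of length $c_2(\mathcal E) = 2$, fitting into $0 \to \mathcal O_X \to \mathcal E \to \det\mathcal E \otimes \mathcal I_Z \to 0$; since $\mathcal E$ is spanned, so is $\det\mathcal E \otimes \mathcal I_Z$, which already severely restricts $X$ and $\det\mathcal E$. One then checks that $(\det\mathcal E)^2 = c_1(\mathcal E)^2 = 5$ is incompatible with the existence of such a subscheme $Z$ of length $2$ imposing independent conditions, together with the spannedness; in particular the Hodge-index-type argument used in the proof of Proposition \ref{E decomp} can be adapted, and one should also exploit that Proposition \ref{E decomp} itself already rules out $\mathcal E$ being an extension of two ample line bundles, so the length-$2$ scheme $Z$ must be genuinely nonreduced or in general position, neither of which survives the numerical constraints.

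The main obstacle I anticipate is the case $c_2(\mathcal E) = 2$: unlike $c_2 = 1$, there is no single citable theorem that pins down $(X,\mathcal E)$, so one must run the Chern-class bookkeeping by hand, combining \eqref{Hcube}, the stability from Remark \ref{rem0}, the indecomposability from Proposition \ref{E decomp}, and the ampleness and spannedness of both $\mathcal E$ and (via Proposition \ref{A}, if applicable) $K_X + \det\mathcal E$. A clean route is: from $c_1(\mathcal E)^2 = 5$ and the spannedness of $\det\mathcal E \otimes \mathcal I_Z$, together with $2 = \ell(Z) = c_2(\mathcal E)$, derive a contradiction with the Hodge index inequality applied to $\det\mathcal E$ and an effective curve in $|\det\mathcal E \otimes \mathcal I_Z|$ — the self-intersection $5$ is not a perfect square, which obstructs the numerical equivalences that spannedness would otherwise force, exactly in the spirit of the argument closing Proposition \ref{E decomp}. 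Once both $c_2(\mathcal E) = 1$ and $c_2(\mathcal E) = 2$ are excluded, the inequality $c_2(\mathcal E) \geq 3$ follows immediately.
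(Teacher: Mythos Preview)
Your treatment of $c_2(\mathcal E)=1$ is fine and matches the paper. The case $c_2(\mathcal E)=2$, however, is where your proposal has a real gap.

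First, your assertion that ``there is no single citable theorem that pins down $(X,\mathcal E)$'' is simply false: this is exactly what Noma's classification \cite[Theorem 6.1]{No} does. For an ample and spanned rank-$2$ vector bundle on a surface with $c_2=2$, Noma proves that either $\mathcal E$ is decomposable or $X$ is irregular. Proposition~\ref{E decomp} excludes the first alternative and Proposition~\ref{prelim}(b) excludes the second, and that is the whole proof.

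Second, the substitute you propose does not work. The Hodge-index step in Proposition~\ref{E decomp} used that $\mathcal E$ sat in an extension of two \emph{ample line bundles} $M,N$, giving $M^2=M\cdot N=N^2=1$ and hence $M\equiv N$. In your Koszul sequence $0\to\mathcal O_X\to\mathcal E\to\det\mathcal E\otimes\mathcal I_Z\to 0$ the outer terms are not of that shape, and any curve $C\in|\det\mathcal E\otimes\mathcal I_Z|$ is numerically equivalent to $\det\mathcal E$, so applying Hodge index to $\det\mathcal E$ and $C$ is vacuous. The remark that $5$ is not a perfect square has no evident bearing: nothing in the setup forces an equality of the form $(\det\mathcal E)^2=k^2$ for an integer $k$. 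In short, the ``clean route'' you sketch never produces a contradiction; you would essentially have to reprove Noma's result to make it go through.
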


\begin{proof} Since $\mathcal E$ is ample and spanned we know that $c_2(\mathcal E) \geq 1$ with equality occurring
only in the obvious case, as already said.
Let $c_2(\mathcal E) =2$. Then a result of Noma \cite[Theorem 6.1]{No} shows that
either $\mathcal E$  is decomposable, which is impossible by Proposition \ref{E decomp}, or $X$ is not
a regular surface, which contradicts Proposition \ref{prelim} (b).
\end{proof}

Finally, we can prove

\begin{proposition} \label{Fano}
Suppose that $Y$ is a Fano threefold; then
$(Y,H)$ is as in the obvious case.
\end{proposition}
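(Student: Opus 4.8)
The plan is to show that the Fano hypothesis forces $K_X + \det\mathcal E$ to fail to be ample, so that Proposition \ref{A} can only hold vacuously, i.e.\ $(Y,H)$ is in the obvious case. First I would compute $-K_Y$ in our setting: combining the canonical bundle formula $K_Y = -2H + \pi^*(K_X+\det\mathcal E)$ from Proposition \ref{prelim}, the hypothesis that $Y$ is Fano means $-K_Y = 2H - \pi^*(K_X+\det\mathcal E)$ is ample. I would then restrict to a fiber $F\cong\mathbb P^1$ of $\pi$: since $H_F = \mathcal O_{\mathbb P^1}(1)$ and $\pi^*(\cdot)_F = \mathcal O_{\mathbb P^1}$, we get $(-K_Y)_F = \mathcal O_{\mathbb P^1}(2)$, which is consistent and gives no immediate contradiction, so the fiber direction alone is not enough; the real content must come from the base $X$.

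The key step is to push the ampleness of $-K_Y$ down to $X$. Writing $D := K_X + \det\mathcal E$, ampleness of $-K_Y = 2H - \pi^*D$ together with the fact that $H$ is the tautological class of the rank-$2$ bundle $\mathcal E$ should be translated into a positivity statement about $\mathcal E$ relative to $D$: concretely, $2H - \pi^*D$ is the tautological class of $\mathcal E^{\otimes 2}\otimes(-D) = (S^2\mathcal E \oplus \det\mathcal E)\otimes(-D)$ after a suitable identification, or more simply one checks that $-K_Y$ ample is equivalent to $\mathcal E \otimes \mathcal O_X(-\tfrac12 D)$ being ample in the $\mathbb Q$-twisted sense, hence (since ampleness of a twist of a bundle implies that of the corresponding twist of $\det$) to $\det\mathcal E - D = \det\mathcal E - K_X - \det\mathcal E = -K_X$ being ample. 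Thus $X$ is itself a del Pezzo surface. Now I would invoke Proposition \ref{A} directly: if $(Y,H)$ were not in the obvious case, then $K_X + \det\mathcal E$ would be ample and spanned. But then $-K_Y = 2H - \pi^*(K_X+\det\mathcal E)$ being ample forces, on sections of $\pi$, a numerical inequality that I would exploit against $H^3 = 3$. The cleanest route: take the tautological section $\sigma \subset Y$ associated to any quotient line bundle $\mathcal E \twoheadrightarrow Q$; then $H|_\sigma = Q$ and $(K_X+\det\mathcal E)$ is ample on $X\cong\sigma$, so $-K_Y|_\sigma = 2Q - (K_X+\det\mathcal E)$ must be ample, giving $2Q > K_X + \det\mathcal E$ numerically for every quotient $Q$; combined with $c_1(\mathcal E)^2 - c_2(\mathcal E) = 3$ and $c_2(\mathcal E)\ge 3$ from Corollary \ref{c_2(E)}, I expect a contradiction via the Hodge index theorem.

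The main obstacle will be organizing the positivity bookkeeping on the rank-$2$ bundle cleanly — in particular, making rigorous the passage "$-K_Y$ ample $\Rightarrow$ a specific $\mathbb Q$-twist of $\mathcal E$ is ample $\Rightarrow$ a numerical inequality on $X$ independent of the splitting type of $\mathcal E$". I would handle this by using Miyaoka's criterion: $-K_Y = 2H - \pi^*D$ is nef and big (indeed ample) iff the $\mathbb Q$-divisor $H - \tfrac12\pi^*D$ is, which on $\mathbb P_X(\mathcal E)$ is equivalent to ampleness of the $\mathbb Q$-twisted bundle $\mathcal E\langle -\tfrac12 D\rangle$; taking determinants then yields $\det\mathcal E - D = -K_X$ ample, and taking a general quotient yields the per-quotient inequality above. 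Feeding $c_2(\mathcal E)\ge 3$, $c_1(\mathcal E)^2 = 3 + c_2(\mathcal E) \ge 6$, and the fact that on a del Pezzo surface $-K_X$ has bounded degree ($K_X^2 \le 9$), I expect the inequalities $2Q - (K_X+\det\mathcal E)$ ample for all quotients $Q$ of $\mathcal E$ to be simultaneously unsatisfiable unless $c_2(\mathcal E) = 1$, i.e.\ we are in the obvious case, $(X,\mathcal E) = (\mathbb P^2,\mathcal O_{\mathbb P^2}(1)^{\oplus 2})$ — in which case $K_X + \det\mathcal E = \mathcal O_{\mathbb P^2}(-3+2) = \mathcal O_{\mathbb P^2}(-1)$ is indeed anti-ample and $-K_Y = \mathcal O_{\mathbb P^2\times\mathbb P^1}(1,2)$ is ample, consistent with $Y$ Fano. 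This pins down $(Y,H)$ as claimed.
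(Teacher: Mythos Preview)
Your strategy --- combine Proposition~\ref{A} (if not the obvious case, $D:=K_X+\det\mathcal E$ is ample) with the Fano hypothesis to force incompatible positivity constraints on $X$ --- is a viable route, genuinely different from the paper's proof, which instead runs through the Szurek--Wi\'sniewski classification of rank-$2$ Fano bundles on surfaces. However, your proposal has a real gap: the contradiction is never derived, only ``expected''. Moreover, the specific mechanism you propose (restrict $-K_Y$ to sections of $\pi$ corresponding to line-bundle quotients $\mathcal E\twoheadrightarrow Q$) is problematic, since such sections need not exist --- an indecomposable (indeed Bogomolov-stable, by Remark~\ref{rem0}) rank-$2$ bundle on a surface may admit no locally free rank-$1$ quotient at all.

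The argument can be completed by working directly with intersection numbers on $X$. Set $k:=K_X^2$ and $a:=c_1(\mathcal E)\cdot(-K_X)$. From $(-K_Y)^3 = 24+6\big(k-c_1(\mathcal E)^2\big)>0$ one gets $c_1(\mathcal E)^2\le k+3$. Ampleness of both $D$ and $-K_X$ gives $a>k$ and $D^2=k-2a+c_1(\mathcal E)^2>0$, so $k<a<\tfrac12\big(k+c_1(\mathcal E)^2\big)\le k+\tfrac32$; thus $a=k+1$, and then $D^2>0$ forces $c_1(\mathcal E)^2=k+3$. Now the Hodge index theorem gives $(k+1)^2=a^2\ge k\,c_1(\mathcal E)^2=k(k+3)$, i.e.\ $k\le 1$, whence $k=1$ and $c_2(\mathcal E)=c_1(\mathcal E)^2-3=1$, contradicting Corollary~\ref{c_2(E)}. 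This closes your argument. Compared with the paper's classification-based proof, your route is more elementary and self-contained once the numerics are actually carried out; the paper's approach, on the other hand, identifies the surviving candidate bundles explicitly before ruling them out by a direct non-ampleness check.
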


\begin{proof} Due to the assumption, $\mathcal E$ is a Fano bundle on $X$ \cite{SW}.
Let $\mathcal F$ be another rank-2 vector bundle on $X$ such that $Y=\mathbb P_X(\mathcal F)$.
Denoting by $\xi$ its tautological line bundle, from the
fact that $\mathcal E = \mathcal F \otimes \mathcal O_X(D)$ for some divisor $D$ on $X$,
we see that $H=\xi + \pi^*D$. Since $c_1(\mathcal E)= c_1(\mathcal F)+2D$ and
$c_2(\mathcal E) = c_2(\mathcal F) + c_1(\mathcal F)\cdot D + D^2$, we get from \eqref{Hcube} that
\begin{equation}\label{xicube}
3 = H^3 = c_1(\mathcal F)^2-c_2(\mathcal F) + 3D \cdot \big(c_1(\mathcal F)+D\big),
\end{equation}
hence $\xi^3 = c_1(\mathcal F)^2-c_2(\mathcal F)$ is also divisible by $3$. Moreover, the
fact that $Y$ is Fano implies that $X$ is a del Pezzo surface \cite[Proposition 1.5]{SW}.
We can therefore assume that $\mathcal F$ is normalized in an appropriate way.
Suppose that $(Y,H)$ is not as in the obvious case. Then, checking the list of the rank-$2$ Fano
bundles on surfaces \cite[Theorem]{SW} and taking into account  Proposition \ref{E decomp}
and \eqref{xicube} we see that if our $(X,\mathcal F)$ is in that list, then the possibilities for
$(X, \mathcal E)$, if any, restrict to the following:

\begin{enumerate}
\item[(1)] $X=\mathbb P^2$ and $\mathcal E$
is a stable spanned bundle fitting in an exact sequence
\newline
$0 \to \mathcal O_{\mathbb P^2}(-2)  \to \mathcal O_{\mathbb P^2}^{\oplus 3} \to \mathcal E \to 0$
 \ (case 7 in \cite[Theorem]{SW}; here $\mathcal E =\mathcal F(1)$);
\item[(2)] $X = \mathbb P^1 \times \mathbb P^1$ and $\mathcal E$
is a stable spanned bundle fitting in an exact sequence
\newline
$0 \to \mathcal O_{\mathbb P^1 \times \mathbb P^1}(-1,-1)  \to \mathcal O_{\mathbb P^1 \times \mathbb P^1}^{\oplus 3} \to \mathcal E \to 0$
 \ (case 12 in \cite[Theorem]{SW}; here $\mathcal E =\mathcal F(1,1)$).
\end{enumerate}

\noindent
However, in these cases the vector bundle $\mathcal E$ is not ample.
To see this suppose we are in case (1), consider the inclusion of
$Y=\mathbb P_{\mathbb P^2}(\mathcal E)$ in $\mathbb P^2 \times \mathbb P^2=
\mathbb P(\mathcal O_{\mathbb P^2}^{\oplus 3 })$
corresponding to the surjection $\mathcal O_{\mathbb P^2}^{\oplus 3} \to \mathcal E$ and
call $\rho:Y \to \mathbb P^2$ the restriction of the second projection $p_2$
of $\mathbb P^2 \times \mathbb P^2$ to $Y$ (note that $\pi$ is the restriction of the first projection).
Then, for the tautological line bundle of $\mathcal E$ we have that
$H=\rho^* \mathcal O_{\mathbb P^2}(1)$. Fix a point $x \in \mathbb P^2$:
then $\gamma:= \rho^{-1}(x)= p_2^{-1}(x) \cap Y$ is a curve inside $Y$ and clearly
if $\ell \subset \mathbb P^2$ is a general line, we get
$H \cap \gamma=\rho^{-1}(\ell) \cap \rho^{-1}(x) = \emptyset$.
Therefore $H$, hence $\mathcal E$, is not ample. The same argument applies to case (2) and this concludes the proof.
\end{proof}


\section{Further constraints on $Y$ deriving from $S$ as triple plane} \label{constraints}
\setcounter{equation}{0}

\setcounter{equation}{0}

Let $Y$, $H$ and $\mathcal E$ be as in \eqref{setting}, and
let $S$ be a general element of the linear subsystem $\phi^*|\mathcal O_{\mathbb P^3}(1)| \subseteq |H|$
(recall that equality holds except when $(Y,H)$ is as in the obvious case).
Then $S$ is a smooth regular surface, by Proposition
\ref{prelim} (c), and the polarized surface $(S,H_S)$ inherits from $(Y,H)$ the structure of a triple plane
$\varphi:=\phi|_S:S \to \mathbb P^2$,
where $H_S = \varphi^*\mathcal O_{\mathbb P^2}(1)$. Moreover, referring to the scroll structure of
$(Y,H)$, by restricting the projection
$\pi:Y \to X$ to $S$ we get a birational morphism $r=\pi|_S: S \to X$. More precisely,
the pair $(S,H_S)$ has $(X, \det \mathcal E)$
as its adjunction theoretic minimal reduction, the reduction morphism being $r$.
This means that $S$ is a meromorphic non-holomorphic section of $\pi$
which contains $s > 0$ fibres $e_1, \dots ,e_s$
of $\pi: Y \to X$, and these curves, which are lines of $(S,H_S)$, are contracted by the birational morphism $r$ to a finite subset of $X$; in addition,
$X$ can contain no line with respect to $\det \mathcal E$, $\mathcal E$ being ample of rank $2$. Hence
$(X, \det \mathcal E)$ is the minimal reduction of $(S,H_S)$. In particular, $S$ is not minimal; moreover,
$H_S = r^* \det \mathcal E - \sum_{i=1}^s e_i$, so that
$(\det \mathcal E)^2 = c_1(\mathcal E)^2 = 3+s$, which combined with \eqref{Hcube}
shows that
\begin{equation} \label{s=c2}
s=c_2(\mathcal E).
\end{equation}
We have also $K_S = r^*K_X + \sum_{i=1}^s e_i$, hence $K_S+H_S = r^*(K_X+\det\mathcal E)$, which
has the following consequence on the sectional genus:
\begin{equation} \label{genus}
g(Y,H) = g(S,H_S)= g(X,\det \mathcal E).
\end{equation}
We set $g:=g(Y,H)$. Furthermore, $K_S^2=K_X^2-s$ and $e(S)=e(X)+s$.
Consider the exact sequence
\begin{equation} \label{exseq}
0 \to \mathcal O_Y \to H \to H_S \to 0. 
\end{equation}

By pushing \eqref{exseq} down via $\pi$ we get the sequence
\begin{equation}
0 \to \mathcal O_X \to \mathcal E \to \det \mathcal E \otimes \mathcal J_Z \to 0, \nonumber
\end{equation}
defined by the multiplication by $\theta$, the section of $\mathcal E$ that corresponds to the section of $H$ defining
$S$ in the isomorphism $H^0(Y,H) \cong H^0(X,\mathcal E)$. Here $Z$ stands for the zero locus of $\theta$ and
$\mathcal J_Z$ for its ideal sheaf. Recall that $Z$ consists of $s=c_2(\mathcal E)$ points of $X$, by \eqref{s=c2}.
Clearly,
\begin{equation}
h^0(\det \mathcal E \otimes \mathcal J_Z) = h^0(\det \mathcal E) - t  \nonumber
\end{equation}
where $t$ is the number of linearly independent linear conditions to be imposed
on an element of $|\det \mathcal E|$ to contain $Z$.
Of course, $t \leq \text{Card}(Z)=s$. On the other hand, recalling that $X$ is regular by Proposition
\ref{prelim} (b), we see from the cohomology of the exact sequence above that
\begin{equation}
h^0(\det \mathcal E \otimes \mathcal J_Z) = h^0(\mathcal E) -1 = h^0(H)-1=3,   \nonumber
\end{equation}
provided that $(Y,H)$ is not as in the obvious case. So we have
\begin{proposition}\label{triple plane}
Suppose that $(Y,H)$ is not as in the obvious case.
Then $\varphi:S \to \mathbb P^2$ factors through $r$ and the rational map
defined by the linear subsystem
of $|\det \mathcal E|$ of curves passing through $c_2(\mathcal E)$ points of $X$ that impose only
$h^0(\det \mathcal E)-3$ linearly independent linear conditions on them.
\end{proposition}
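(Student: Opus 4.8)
The plan is to identify $\varphi$ with the morphism attached to the \emph{complete} linear system $|H_S|$, to transport this system from $S$ to $X$ via $r=\pi|_S$, and to recognize the resulting system on $X$ as the one described in the statement.

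First, since $(Y,H)$ is not as in the obvious case, Proposition \ref{very ample} gives $\Delta(Y,H)=2$, whence $h^0(H)=\dim Y+H^3-\Delta(Y,H)=4$ and $\phi^*|\mathcal O_{\mathbb P^3}(1)|=|H|$; thus $S$ is a general member of $|H|$ and $\varphi=\phi|_S$ is cut on $S$ by the restrictions of the members of $|H|$. As $\phi$ is a finite morphism, $|H|$ is base-point free, hence so is its restriction to $S$, so $\varphi=\varphi_V$ with $V=\mathrm{Im}\big(H^0(Y,H)\to H^0(S,H_S)\big)$. By \eqref{exseq} and $h^1(\mathcal O_Y)=0$ (Proposition \ref{prelim} (a)) this restriction is surjective with kernel $\langle\theta\rangle$, so $V=H^0(S,H_S)$ is $3$-dimensional and $\varphi=\varphi_{|H_S|}$.

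Next I would push $|H_S|$ down to $X$. Since $\pi$ is a $\mathbb P^1$-bundle, $R^1\pi_*\mathcal O_Y=0$, and pushing \eqref{exseq} forward gives the sequence $0\to\mathcal O_X\to\mathcal E\to\det\mathcal E\otimes\mathcal J_Z\to 0$ recorded above, with $\mathrm{length}(Z)=c_2(\mathcal E)=s$ by \eqref{s=c2}. On the other hand $H^0(S,H_S)=H^0(X,r_*H_S)$, and the relation $H_S=r^*\det\mathcal E-\sum_{i=1}^s e_i$ together with the projection formula yields $r_*H_S=\det\mathcal E\otimes r_*\mathcal O_S\big(-\sum e_i\big)=\det\mathcal E\otimes\mathcal J_Z$, in agreement with the pushed-down sequence; moreover $D\mapsto r^*D-\sum e_i$ defines an isomorphism of linear systems $|\det\mathcal E\otimes\mathcal J_Z|\xrightarrow{\ \sim\ }|H_S|$, both being planes because $h^0(\det\mathcal E\otimes\mathcal J_Z)=h^0(\mathcal E)-1=3$, as computed just before the statement.

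Finally, let $\psi:X\dashrightarrow\mathbb P^2$ be the rational map defined by $|\det\mathcal E\otimes\mathcal J_Z|$. For a general $p\in S$, i.e.\ $p\notin\bigcup_i e_i$, the morphism $r$ is a local isomorphism near $p$ and the above isomorphism of linear systems matches the hyperplane of members of $|H_S|$ through $p$ with that of members of $|\det\mathcal E\otimes\mathcal J_Z|$ through $r(p)$; hence $\varphi(p)=\psi(r(p))$, so $\varphi=\psi\circ r$ as rational maps. By construction $|\det\mathcal E\otimes\mathcal J_Z|$ is the subsystem of $|\det\mathcal E|$ of curves through the $c_2(\mathcal E)$ points of $Z$, and since $h^0(\det\mathcal E\otimes\mathcal J_Z)=3$, these points impose exactly $h^0(\det\mathcal E)-3$ independent linear conditions on $|\det\mathcal E|$. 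The point requiring care is the bookkeeping with $r_*\mathcal O_S(-\sum e_i)=\mathcal J_Z$ and the matching of the two descriptions of $|H_S|$; one should also keep in mind that the factorization holds only in the rational category, since $\psi$ is genuinely not a morphism, its indeterminacy along $Z$ being resolved precisely by $r$, whereas $\varphi$ does not contract the curves $e_i$ (each is a line of $(S,H_S)$, so $\varphi|_{e_i}$ embeds it onto a line of $\mathbb P^2$).
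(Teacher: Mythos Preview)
Your argument is correct and follows the same route as the paper: push down the sequence \eqref{exseq} to obtain $0\to\mathcal O_X\to\mathcal E\to\det\mathcal E\otimes\mathcal J_Z\to 0$, use regularity of $X$ and $h^0(H)=4$ to get $h^0(\det\mathcal E\otimes\mathcal J_Z)=3$, and read off the number of conditions as $h^0(\det\mathcal E)-3$. The paper leaves the actual factorization $\varphi=\psi\circ r$ implicit, whereas you spell it out via the identification $r_*H_S=\det\mathcal E\otimes\mathcal J_Z$ and the matching of linear systems; this extra bookkeeping is sound and makes the statement more transparent.
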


The following result will have relevant consequences.

\begin{proposition} \label{prop 4}
Suppose that $(Y,H)$ is not as in the obvious case.
Then $g \geq 3$, equality implying
$X=\mathbb P^2$
and $\mathcal E$ indecomposable of generic splitting type $(2,2)$, in particular semistable, with
$c_2(\mathcal E)=13$.
\end{proposition}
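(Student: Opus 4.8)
The plan is to route the adjunction‑theoretic data of Section~\ref{on surfaces} through the Hodge index theorem and through Miranda's formulas for the triple plane $\varphi:S\to\mathbb P^2$ induced by $\phi$. Throughout I work outside the obvious case, so that $S$ is the blow‑up of $X$ at the $c_2(\mathcal E)$ points of $Z$ (cf.\ \eqref{s=c2}) and $g=g(X,\det\mathcal E)$ by \eqref{genus}. First I would prove $g\ge 3$. Set $A:=K_X+\det\mathcal E$; by Proposition~\ref{A} this is ample, and $\det\mathcal E$ is ample with $(\det\mathcal E)^2=c_1(\mathcal E)^2=3+c_2(\mathcal E)$ by \eqref{Hcube}. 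Since $A\cdot\det\mathcal E=2g-2$, the Hodge index theorem applied to the ample class $\det\mathcal E$ gives
\[
(2g-2)^2=(A\cdot\det\mathcal E)^2\ \ge\ A^2\,(\det\mathcal E)^2\ \ge\ 3+c_2(\mathcal E)\ \ge\ 6 ,
\]
using $A^2\ge 1$ and Corollary~\ref{c_2(E)}. As $2g-2$ is a positive even integer, $2g-2\ge 4$, i.e.\ $g\ge 3$.

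Next, assuming $g=3$, the goal is the identity $A^2=1$. Applying Riemann--Hurwitz to $\varphi^{-1}(\ell)$ for a general line $\ell\subset\mathbb P^2$, exactly as in the proof of Theorem~\ref{maintheorem} (cf.\ \eqref{b1}), the Tschirnhaus bundle $\mathcal T$ of $\varphi$ has $c_1(\mathcal T)=-(g+2)=-5$. Eliminating $c_2(\mathcal T)$ from Miranda's formulas \eqref{M form} written for $S$, substituting the blow‑up relations $K_S^2=K_X^2-c_2(\mathcal E)$, $e(S)=e(X)+c_2(\mathcal E)$, and using Noether's formula $K_X^2+e(X)=12\chi(\mathcal O_X)$, one obtains after a short computation
\[
K_X^2=3\chi(\mathcal O_X)+c_2(\mathcal E)+\tfrac12(g-1)(g-10)=3\chi(\mathcal O_X)+c_2(\mathcal E)-7 .
\]
On the other hand $K_X\cdot\det\mathcal E=2g-2-(\det\mathcal E)^2=1-c_2(\mathcal E)<0$ because $c_2(\mathcal E)\ge 3$; as $\det\mathcal E$ is ample this forces $\kappa(X)=-\infty$, and since $X$ is regular (Proposition~\ref{prelim}(b)) it is a rational surface, so $\chi(\mathcal O_X)=1$. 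Plugging $\chi(\mathcal O_X)=1$ and the two displayed identities into $A^2=K_X^2+2K_X\cdot\det\mathcal E+(\det\mathcal E)^2$ gives $A^2=1$.

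It then remains to identify $X$ and $\mathcal E$. Now $A=K_X+\det\mathcal E$ is ample \emph{and spanned} (Proposition~\ref{A}) with $A^2=1$, hence $\varphi_A$ is finite; since $A^2=1$ its image is a linear $\mathbb P^2$ and $\varphi_A$ has degree one onto it, so $\varphi_A$ is an isomorphism (equivalently $\Delta(X,A)=0$ with $A^2=1$, and one invokes Fujita's classification \cite{libroFujita}). In either case $(X,A)=(\mathbb P^2,\mathcal O_{\mathbb P^2}(1))$, so $\det\mathcal E=A-K_{\mathbb P^2}=\mathcal O_{\mathbb P^2}(4)$ and $c_2(\mathcal E)=c_1(\mathcal E)^2-3=13$ by \eqref{Hcube}. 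The bundle $\mathcal E$ is indecomposable by Proposition~\ref{E decomp}. For the generic splitting type $(a,4-a)$, ampleness of $\mathcal E$ leaves $a\in\{2,3\}$; the value $a=3$ is impossible, since by Grauert--M\"ulich $\mathcal E$ would then be unstable, hence contain $\mathcal O_{\mathbb P^2}(3)$ with torsion‑free quotient $\mathcal I_W(1)$ where $W$ has length $c_2(\mathcal E)-3=10$, and such a quotient cannot be globally generated although $\mathcal E$ is spanned. Hence the generic splitting type is $(2,2)$, and then any sub‑line‑bundle $\mathcal O_{\mathbb P^2}(k)$ with $k\ge 3$ would restrict to zero on a general line, so $\mathcal E$ is semistable.

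The delicate point is the middle step: the whole conclusion pivots on the equality $A^2=1$, which emerges only after correctly combining Miranda's formulas for $S$ (and not for $Y$!), the blow‑up relations for $S\to X$, Noether's formula, and the rationality of $X$ forced by $K_X\cdot\det\mathcal E<0$. One must also keep straight that $c_1(\mathcal T)$ refers to the Tschirnhaus bundle of $\varphi:S\to\mathbb P^2$ and that $S$ is genuinely the blow‑up of $X$ at $c_2(\mathcal E)$ points — precisely the content of the discussion preceding \eqref{s=c2}, valid outside the obvious case. Once $A^2=1$ is in hand, the remaining arguments are routine Hodge‑index bookkeeping together with the structural results already established in Section~\ref{on surfaces}.
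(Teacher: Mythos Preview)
Your proof is correct and takes a genuinely different route from the paper's. For the bound $g\ge 3$, the paper appeals to the classification of polarized surfaces of sectional genus $\le 1$ together with Proposition~\ref{A}, and then to the Fukuma--Ishihara result \cite{FI} that ample spanned rank-$2$ bundles with $c_1$-sectional genus $2$ are decomposable; you bypass all of this with a clean Hodge-index inequality $(2g-2)^2\ge A^2\,(\det\mathcal E)^2\ge 3+c_2(\mathcal E)\ge 6$, using only Proposition~\ref{A} and Corollary~\ref{c_2(E)}. For the equality case $g=3$, the paper again quotes the list in \cite[Theorem~2.1\,(III)]{FI} to pin down $(X,\det\mathcal E)=\big(\mathbb P^2,\mathcal O(4)\big)$, whereas you compute directly: eliminating $b_2$ from Miranda's formulas \eqref{M form} and combining with Noether yields $K_X^2=3\chi(\mathcal O_X)+c_2(\mathcal E)+\tfrac12(g-1)(g-10)$; the sign of $K_X\cdot\det\mathcal E$ forces $X$ rational, and then $A^2=1$ drops out, giving $(X,A)=(\mathbb P^2,\mathcal O(1))$. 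Finally, to exclude the generic splitting type $(3,1)$ the paper invokes uniformity (no jumping lines by ampleness) and Van de Ven's theorem, while you use Grauert--M\"ulich to force instability and then observe that the spanned quotient $\mathcal I_W(1)$ with $\mathrm{length}(W)=10$ is absurd. Your approach is more self-contained, trading the external classification \cite{FI} and Van de Ven for an explicit numerical computation; the paper's approach is shorter to state but leans on heavier structural input. Both are valid; the phrasing ``would restrict to zero on a general line'' in your last step could be made a touch more precise (the point is that an injection $\mathcal O(k)\hookrightarrow\mathcal E$ stays injective on a general line, which is impossible into $\mathcal O_\ell(2)^{\oplus 2}$ for $k\ge 3$).
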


\begin{proof} Look at $(X, \det \mathcal E)$. Polarized
surfaces with sectional genus $\leq 1$ are well known \cite{libroFujita}. By
Proposition \ref{A} we know that $K_X + \det \mathcal E$ is ample and
spanned, since $(Y, H)$ is not as in the obvious case. We can
thus exclude that $(X, \det \mathcal E)$ is such a pair. Therefore $g \geq 2$.
However, it cannot be $g=2$, since  every ample and spanned rank 2 vector bundle of $c_1$-sectional genus $2$
(i.e., $g(X,\det \mathcal E)=2$) on a surface
is decomposable \cite[proof of the Theorem in the appendix]{FI}, but this is in contrast with Proposition \ref{E decomp}.
Finally, suppose that $g=3$. Then a close check of the list in \cite[Theorem 2.1, (III)]{FI}
combined with  Proposition \ref{E decomp} again
confines the possibilities to the following single case:  $X=\mathbb P^2$ and
$\mathcal E$ indecomposable with $\det \mathcal E =\mathcal O_{\mathbb P^2}(4)$.
Now, let $(a_1,a_2)$, with $a_1 \geq a_2$, be the
generic splitting type of $\mathcal E$ (i.e., $\mathcal E_{\ell} = \mathcal O_{\ell}(a_1) \oplus \mathcal O_{\ell}(a_2)$
for the general line $\ell \subset \mathbb P^2$). Clearly $(a_1,a_2)=(3,1)$ or $(2,2)$, due to the ampleness.
Suppose that
$(a_1,a_2)=(3,1)$. Then $\mathcal E$ has no jumping lines \cite[p.\ 29]{OSS}, so that it is uniform.
Thus, according to a theorem of Van de Ven \cite[p.\ 211]{OSS}, $\mathcal E$ is either
$\mathcal O_{\mathbb P^2}(3) \oplus \mathcal O_{\mathbb P^2}(1)$,
or a twist of the tangent bundle. Both cases, however, have to be excluded: the former would contradict
Proposition \ref{E decomp}, while
in the latter $\det \mathcal E$ could not be $\mathcal O_{\mathbb P^2}(4)$. Therefore $(a_1,a_2)=(2,2)$.
It thus follows from \cite[Lemma 2.2.1, p.\ 209]{OSS} that $\mathcal E$ is semistable. Finally, \eqref{Hcube}
implies $c_2(\mathcal E)=13$.
\end{proof}
\setcounter{equation}{0}

\begin{rmk}
Note that spanned rank-2 vector bundles on $\mathbb P^2$ with Chern classes $(c_1,c_2)=(4,13)$
do exist according to \cite[Theorem 0.1]{El}.
Anyway, the general stable rank-$2$ vector bundle with these Chern classes is certainly not spanned,
since its invariants do not satisfy the conditions in \cite[Theorem 2.6]{HK}. As a consequence, \cite[Theorem 5.1]{HK}
is not applicable to establish the ampleness. Moreover, for a vector bundle like $\mathcal E$, giving rise to
a pair $(Y,H)$ with $g=3$, if any, we know that $h^0(\mathcal E)=4$ and by the Riemann--Roch theorem combined
with the exact cohomology sequence induced by \eqref{exseq} it follows easily that $h^1(\mathcal E)=1$.
Therefore, such an $\mathcal E$, if any, would be quite special in moduli by the Weak Brill--Noether theorem for $\mathbb P^2$
\cite[Theorem 2.4]{HK}. In fact, such a vector bundle does not exist, according to what
we will prove in Section \ref{overP2}.
\end{rmk}

Now let us focus on the triple plane
$\varphi:S \to \mathbb P^2$ induced by $\phi$, deriving further restrictions on $Y$.
Let $B$ be the branch locus and let $\mathcal T$ be the rank 2 vector bundle
on $\mathbb P^2$ such that $\varphi_*\mathcal O_S = \mathcal O_{\mathbb P^2}\oplus \mathcal T$,
i.e.\ the Tschirnhaus bundle of $\varphi$.
Set $b_i=c_i(\mathcal T)$. Then $B \in |2 \det \mathcal T^{\vee}|$ so that $b:= \deg B = -2b_1 >0 $;
moreover, if $\varphi$ is general in the sense of \cite[p.\ 1154]{Mi3ple}, then $B$ is irreducible
and has only cusps as singularities, their number being $c = 3b_2$ \cite[Lemma 10.1]{Mi3ple}.
Furthermore, the Riemann--Hurwitz theorem applied to $\varphi^{-1}(\ell)$, where $\ell \subset \mathbb P^2$
is a general line, gives
\setcounter{theorem}{0}
\setcounter{equation}{3}

\begin{equation} \label{b}
b = 2g+4.
\end{equation}

\setcounter{theorem}{3}
\setcounter{equation}{0}

As an immediate consequence of Proposition \ref{prop 4} we have

\begin{rmk}\label{b>=10}
If $(Y,H)$ is not as in the obvious case, then $b \geq 10$, equality implying $g=3$.
\end{rmk}

Consider the equalities $h^i(\mathcal O_S) = h^i(\mathcal O_{\mathbb P^2}) + h^i(\mathcal T)$
coming from the definition of $\mathcal T$.
For $i=1$, since $S$ is regular we get $h^1(\mathcal T)=0$.
On the other hand we know that $h^0(\mathcal T)=0$, hence letting $i=2$ we see that
$h^2(\mathcal O_S) = h^2(\mathcal T)= \chi(\mathcal T)$, which can be computed with the Riemann--Roch theorem
\cite[p.\ 26]{BHPV}. In conclusion, we obtain
$p_g(S) = \frac{1}{8} b(b-6) + 2 - \frac{c}{3}$, hence
\begin{equation}
\chi(\mathcal O_S) = 1 + p_g(S) = \frac{1}{8} b(b-6) + 3 - \frac{c}{3}.  \nonumber
\end{equation}
On the other hand, Miranda's formulas \eqref{M form}, rewritten for $S$ in terms of $b$ and $c$,
provide the following values of $K_S^2$ and $e(S)$:
\setcounter{theorem}{0}
\setcounter{equation}{4}
\begin{equation} \label{M formulas}
K_S^2 = 27 - 6b + \frac{1}{2}b^2 - c \qquad \text{and} \qquad  e(S)= 9 - 3b + b^2 - 3c .
\end{equation}
\setcounter{theorem}{4}
\setcounter{equation}{0}
Recalling that $r:S \to X$ is a birational morphism
which factors through $s$ blowing-ups, this immediately gives the corresponding numerical characters of $X$.

\medskip
It is useful to recall that for $(Y,H)$ as in the obvious case, the pair $(S,H_S)$ is as in (2)
of Theorem \ref {maintheorem}. In particular, we have $g=0$; moreover, for
the triple plane $\varphi:S \to \mathbb P^2$ induced by $\phi$,
the Tschirnhaus bundle is $\mathcal T = \mathcal O_{\mathbb P^2}(-1)^{\oplus 2}$ \cite[Table 10.5]{Mi3ple}.
Then the branch curve $B$ of $\varphi$ is a quartic, since $b=-2b_1=4$,
and \eqref{M formulas} shows that $c=3$. Furthermore,
$\varphi$ maps the only $(-1)$-line of $(S, H_S)$
(namely the only fiber of $\pi:Y \to X$ that $S$ contains), isomorphically to a line $\ell \subset \mathbb P^2$,
which is bitangent to $B$ (there is only one bitangent line in this case, by Pl\"ucker formulas).

\medskip
Coming back to the general case, a natural question
concerning the Tschirnhaus bundle of $\varphi$ is what happens
when $\mathcal T$ is decomposable, namely
$\mathcal T = \mathcal O_{\mathbb P^2}(-m) \oplus \mathcal O_{\mathbb P^2}(-n)$
for some positive integers $m$, $n$, as in \cite[p.\ 1156]{Mi3ple}.
As we have seen, $(m,n)=(1,1)$ corresponds to $(Y,H)$ being as in the obvious case.
We have $b=2(m+n)$, $c=3mn$ and we can rewrite the invariants of $S$ in terms of $m,n$ as in \cite[Corollary 10.4]{Mi3ple}.
In particular, we have
$p_g(S)= (\frac{1}{2})(m^2+n^2 -3m -3n)+2$,
$K_S^2=2(m+n-3)^2-3(mn-3)$, $e(S)=4(m+n)^2 - 6(m+n)-9(mn-1)$.
Then we immediately obtain the following result.

\setcounter{equation}{0}

\begin{proposition} \label{T decomp}
If $\mathcal T$ is decomposable and $X$ is a surface with $p_g(X)=0$,
then $(Y,H)$ is necessarily as in the obvious case.
\end{proposition}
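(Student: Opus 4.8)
The plan is to combine the numerical formulae for a triple plane with decomposable Tschirnhaus bundle, recalled immediately before the statement, with the hypothesis $p_g(X)=0$ read off via the reduction morphism $r\colon S\to X$, and then to quote Remark \ref{b>=10}.

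First I would write $\mathcal T=\mathcal O_{\mathbb P^2}(-m)\oplus\mathcal O_{\mathbb P^2}(-n)$ with $1\le m\le n$, so that $b=2(m+n)$, $c=3mn$ and $p_g(S)=\tfrac12(m^2+n^2-3m-3n)+2$. Since $r$ is a composition of blow-ups of smooth surfaces, $p_g$ is a birational invariant, so $p_g(X)=p_g(S)$; hence the hypothesis $p_g(X)=0$ becomes $m^2+n^2-3m-3n+4=0$, equivalently
\[
(2m-3)^2+(2n-3)^2=2.
\]
The integers $2m-3$ and $2n-3$ are odd, so each of the two squares on the left must equal $1$; therefore $m,n\in\{1,2\}$, and in particular $b=2(m+n)\le 8<10$.

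Finally, if $(Y,H)$ were not as in the obvious case, then Remark \ref{b>=10} would force $b\ge 10$, a contradiction; so $(Y,H)$ is as in the obvious case. (One could equally well finish by noting that the pairs $(m,n)=(1,2)$ and $(2,2)$ give sectional genus $g=1$ and $g=2$ by \eqref{b}, both excluded by Proposition \ref{prop 4}, leaving only $(m,n)=(1,1)$.) There is no genuine obstacle here: the argument reduces to the elementary observation above about the Diophantine equation $(2m-3)^2+(2n-3)^2=2$, the only point deserving a word being that $p_g$ is unchanged under $r$.
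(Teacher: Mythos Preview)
Your proof is correct and follows essentially the same route as the paper: reduce $p_g(X)=0$ to $p_g(S)=0$ via birational invariance, solve the resulting Diophantine equation $m^2+n^2-3m-3n+4=0$ to get $m,n\in\{1,2\}$, and conclude via Remark \ref{b>=10}. The only cosmetic difference is that the paper describes this equation as a circle of radius $1/\sqrt{2}$ centered at $(3/2,3/2)$ to read off the integral points, whereas you complete the square as $(2m-3)^2+(2n-3)^2=2$ and use parity; both amount to the same elementary observation.
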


\begin{proof} Since $p_g$ is a birational invariant, we have $p_g(S)=0$.
Hence $(m,n)$ must be an integral point
of the curve $\Gamma$ represented in the $(m,n)$-plane by the equation
\begin{equation}
m^2+n^2-3m-3n+4=0.  \nonumber
\end{equation}
Note that $\Gamma$ is a circle centered at $(\frac{3}{2},\frac{3}{2})$ with radius $\frac{1}{\sqrt{2}}$; hence its
integral points are $(1,1)$, $(1,2)$, $(2,1)$, $(2,2)$ only. In view of the symmetry between $m$ and $n$ we
can confine to consider the three pairs $(m,n)=(1,1), (1,2), (2,2)$.
In all these cases we have $b = 2(m+n) \leq 8$, hence the assertion follows from Remark \ref{b>=10}.
\end{proof}

\setcounter{equation}{0}

Still about the branch curve $B$, we have

\begin{proposition} \label{range of c}
Let things be as in the setting \eqref{setting},
let $S$ be a smooth element of $\phi^*|\mathcal O_{\mathbb P^3}(1)|$,
and suppose that
$\varphi:S \to \mathbb P^2$ is a general triple plane, then
\begin{equation}
\frac{1}{6}\ b^2  <  c \leq \min \Big \lbrace  \frac{1}{16}\ b(5b-6) - \frac{s}{2}\  ,\  \frac{3}{8}\ b(b-6)+6  \Big \rbrace.  \nonumber
\end{equation}
\end{proposition}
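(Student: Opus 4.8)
The plan is to reduce all three inequalities to statements about the two integers $b=-2b_1$ and $c=3b_2$ attached to the Tschirnhaus bundle $\mathcal T$ of the general triple plane $\varphi\colon S\to\mathbb P^2$, and then obtain them in turn from the trivial bound $p_g(S)\ge 0$, from the Bogomolov--Miyaoka--Yau inequality on $X$, and from the Hodge index theorem on $X$. The ingredients already available are Miranda's formulas \eqref{M formulas} together with $p_g(S)=\tfrac18 b(b-6)+2-\tfrac c3$; the blow‑up relations $K_S^2=K_X^2-s$ and $e(S)=e(X)+s$, where $s=c_2(\mathcal E)$; the identities $(\det\mathcal E)^2=3+s$ and $b=2g+4$; and the genus formula for $(X,\det\mathcal E)$, which yields $K_X\cdot\det\mathcal E=2g-5-s$.

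For the first upper bound I would simply use $p_g(S)=h^0(K_S)\ge 0$: the displayed formula for $p_g(S)$ then gives $c\le\tfrac38 b(b-6)+6$ at once. For the second, the key remark is that $X$ is a regular surface by Proposition \ref{prelim}(b); hence either $\kappa(X)\ge 0$, in which case $K_X^2\le 3e(X)$ by Bogomolov--Miyaoka--Yau (the non‑minimal case reducing to the minimal one, since a blow‑up lowers $K^2$ by $1$ and raises $e$ by $1$), or $\kappa(X)=-\infty$, so that $X$ is rational and $K_X^2\le 9\le 3e(X)$ directly. In both cases $K_X^2\le 3e(X)$; inserting $K_X^2=K_S^2+s$, $e(X)=e(S)-s$ and Miranda's formulas and clearing denominators transforms this into exactly $c\le\tfrac1{16}b(5b-6)-\tfrac s2$.

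For the lower bound I would apply the Hodge index theorem on $X$ to $K_X$ and to the ample class $\det\mathcal E$: $(K_X\cdot\det\mathcal E)^2\ge K_X^2\,(\det\mathcal E)^2$, that is, $(2g-5-s)^2\ge(K_S^2+s)(3+s)$. Solving for $K_S^2$, substituting $K_S^2=27-6b+\tfrac12 b^2-c$ from \eqref{M formulas}, and replacing $g$ by $(b-4)/2$, one obtains
\begin{equation}
c\ \ge\ 27-6b+\tfrac12 b^2+s-\frac{(b-9-s)^2}{3+s}\ =\ \tfrac16 b^2+\frac{s(b-6)^2}{3(3+s)}, \nonumber
\end{equation}
the last equality being an elementary manipulation. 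Since $\mathcal E$ is ample one has $s=c_2(\mathcal E)\ge 1$ \cite{BG}, and $b\neq 6$ because in our setting $g\neq 1$ (indeed $g=0$ in the obvious case and $g\ge 3$ otherwise, by Proposition \ref{prop 4}); hence the extra summand is strictly positive and $c>\tfrac16 b^2$.

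The manipulations are routine. The only genuinely delicate points, which I would isolate and justify with care, are the passage from the non‑minimal surface $S$ to its minimal reduction $X$ — this is precisely where the term $\tfrac s2$ in the second upper bound and the strict inequality in the lower bound come from — and the assertion that $K_X^2\le 3e(X)$ holds for every regular surface arising here; the rational cases must be argued directly, since Miyaoka's inequality can fail for (irregular) ruled surfaces.
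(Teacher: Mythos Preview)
Your two upper bounds are obtained exactly as in the paper: $p_g(S)\ge 0$ gives the second term of the $\min$, and $K_X^2\le 3e(X)$ (combined with the blow-up relations and Miranda's formulas) gives the first. Your justification of $K_X^2\le 3e(X)$ via the regularity of $X$ is essentially the paper's argument as well.

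Your lower bound, however, is obtained differently. The paper applies the Hodge index theorem on $S$ to the pair $(R_S,H_S)$, where $R_S=K_S+3H_S$ is the ramification divisor of $\varphi$: since $R_S\cdot H_S=b$ and $R_S^2=\tfrac12 b^2-c$, the inequality $b^2\ge 3R_S^2$ gives $c\ge\tfrac16 b^2$ directly, and strictness is then deduced by analysing the equality case (which would force $S$ to be del Pezzo with $g=1$, contradicting Proposition \ref{prop 4}). You instead work on $X$ with the pair $(K_X,\det\mathcal E)$; your identity
\[
27-6b+\tfrac12 b^2+s-\frac{(b-9-s)^2}{3+s}=\tfrac16 b^2+\frac{s(b-6)^2}{3(3+s)}
\]
is correct (it reduces to $s(u+3)^2$ on both sides with $u=b-9$), so your bound is actually sharper than the paper's. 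Strictness then falls out for free from $s\ge 1$ and $b\neq 6$, with no separate equality discussion needed. The trade-off is that the paper's computation is shorter and, as noted in the Comments following the proposition, its inequality $c\ge\tfrac16 b^2$ holds for any general triple plane, whereas your refinement genuinely uses the passage to the minimal reduction $X$ and hence the scroll hypothesis.
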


\begin{proof} To prove the lower bound for $c$, note that the ramification divisor
of $\varphi$ is $R_S:=R \cap S$, $R$ being the ramification divisor of $\phi$. So, $\varphi(R_S)=B$.
As $H_S=\varphi^* \mathcal O_{\mathbb P^2}(1)$ we have $H_S \cdot R_S = \deg B = b$.
Hence the Hodge index theorem gives the inequality
$b^2 = (R_S \cdot H_S)^2 \geq H_S^2 R_S^2 = 3 R_S^2$. On the other hand $R_S=K_S+3H_S$ by the ramification formula.
Having all the ingredients, recalling \eqref{b} and the expression of $K_S^2$, we can thus compute
\begin{equation}
R_S^2 = (K_S+3H_S)^2 = K_S^2 + 6(2g-2)+3H_S^2 = \frac{1}{2}\ b^2 - c. \nonumber
\end{equation}
Then the above inequality says that
$c \geq \frac{1}{6}\ b^2$ (compare with \cite[Corollary 2.7]{FPV})
Now suppose that equality holds. Then $R_S$ and $H_S$ are linearly dependent in $\text{\rm{NS}}(S) \otimes \mathbb Q$.
But this implies that either $H_S \equiv tK_S$ for some rational $t$ or $K_S \equiv 0$. The latter case cannot
occur, since $S$ is not minimal. In the former case, for a $(-1)$-curve $e \subset S$
we have $0 < H_S e = t K_S e = -t$, hence $t$ is negative. Then
$-K_S \equiv \frac{p}{q}H_S$ for some positive $\frac{p}{q} \in \mathbb Q$.
This means that $S$ is a del Pezzo surface: in particular
we have that $-K_S = \frac{p}{q}H_S$. By combining the classification of these surfaces
with the fact that $S$ is not minimal we argue that $-K_S$ is not divisible in $\text{\rm{NS}}(S)$.
Note that the same is true for $H_S$, since $H_S^2=3$. Thus the above equality allows us to conclude that $-K_S=H_S$,
hence $g=1$. But this is impossible in view of Proposition \ref{prop 4}, taking also into account that
$g=0$ if $(Y,H)$ is as in the obvious case. Therefore the inequality we obtained above is strict.

As to the upper bounds for $c$, the one with respect to the second term in the $\min$ derives
from the obvious inequality $p_g(S) \geq 0$ combined with the expression of $p_g(S)$.
To prove the other bound we use the inequality $K_X^2 \leq 3e(X)$.  Recall that if $X$
is a surface of general type, this is just the Bogomolov--Miyaoka--Yau inequality \cite[p.\ 275]{BHPV},
while if $X$ has Kodaira dimension $\leq 1$ then the above inequality follows immediately from the theory
of minimal models, simply recalling that $X$ is regular, due to Proposition \ref{prelim} (b).
On the other hand, since $S$ is obtained from $X$ via $s$ blowing-ups we have that
$3e(S)-K_S^2 = 4s +\big(3e(X)-K_X^2\big) \geq 4s$,
Due to the expression of both $K_S^2$ and $e(S)$ provided by \eqref{M formulas}
we can immediately convert this inequality into the bound with respect to the first term in the $\min$.
\end{proof}

\setcounter{equation}{0}

\noindent {\it{Comments}}. i)  Concerning the upper bound with respect to the first term of
the $\min$ in Proposition \ref{range of c} one can say a bit more if $X$ is not of
general type, since instead of looking at $3e(S)-K_S^2$ one
can use better lower bounds for $2e(S)-K_S^2$ in terms of $s$, according to the Kodaira dimension.
In particular, if $S$ is rational, then the upper bound for $c$ in Proposition \ref{range of c} can be improved.
Actually, $S \not= \mathbb P^2$, hence, there exists a birational morphism $S \to S_0$, where
$S_0$ is a Segre--Hirzebruch surface (either a minimal model or $\mathbb F_1$).
Then $K_S^2=8-t$, $e(S)=4+t$ for some $t \geq 0$ (the number of blow-ups factoring
this birational morphism). Thus $2e(S)-K_S^2=3t$. Moreover, $s \leq t$ since $X$
is not necessarily minimal, unless $X=\mathbb P^2$, in which case $s=t+1$. So, apart from this case,
$s \leq t = \frac{1}{3}\big(2e(S)-K_S^2\big)$ and
taking into account \eqref{M formulas}, this gives
$c \leq \frac{3}{10}b^2 - \frac{3}{5} (s+3)$.

\noindent
ii) According to \cite[Corollary 2.7]{FPV} the inequality $c \geq \frac{1}{6}b^2$ holds for any general triple plane.
We emphasize that the inequality proved in Proposition \ref{range of c} is strict because it  refers only to
triple planes deriving from a triple solid as in \eqref{setting}.

\medskip

We conclude this Section with a general property that the pair
$(X,\mathcal E)$ has to satisfy if $Y$ is as in our setting.
Recall that $\mathcal E$ is ample and spanned of rank $2$, and $h^0(\mathcal E) \geq 4$
(with equality except when $(Y,H)$ in the obvious case);
so let $V$ be a $4$-dimensional vector subspace of $H^0(X,\mathcal E)$ spanning $\mathcal E$
and let $\mathbb G:=\mathbb G(1,3)$ be the grassmannian of the codimension $2$
vector subspaces of $V$. According to \cite[Remark 2.6]{Ar}, since $\mathcal E$ is ample and spanned by $V$,
$\mathcal E$ defines a morphism $\psi:X \to \mathbb G$, finite to its image $W:=\psi(X)$,
such that $\mathcal E=\psi^*\mathcal Q$, where $\mathcal Q$ is the universal rank-2 quotient bundle of $\mathbb G$.

\begin{proposition} \label{G(1,3)}
Consider the morphism $\psi:X \to \mathbb G$ defined by $\mathcal E$, and write
$W= \alpha\Omega(0,3) + \beta\Omega(1,2)$,
as a linear combination of the usual Schubert cycle classes with integral coefficients
$\alpha =  W \cdot \Omega(0,3)$ and $\beta = W \cdot \Omega(1,2)$ in the cohomology ring of $\mathbb G$.
Then $s=c_2(\mathcal E)=\beta \deg \psi$ and $3=\alpha \deg \psi$. In particular,
if $s$ and $3$ are coprime, then $\psi$ is birational and $W$ has bidegree $(3,s)$.
Moreover, if $\psi$ is an embedding, then the following relation holds,
connecting $c_2(\mathcal E)$ with the Chern classes of the Tschirnhaus bundle of $\varphi$:
\begin{equation}
(s-2)(s-3) = -2b_1^2 - 2b_1 + 6b_2. \nonumber
\end{equation}
\end{proposition}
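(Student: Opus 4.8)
The plan is to establish the three assertions in turn. For the first, recall that $\psi\colon X\to\mathbb G$ is finite of degree $\deg\psi$ onto $W$, so $\psi_*[X]=(\deg\psi)[W]$ and the projection formula yields $\int_X\psi^*\omega=(\deg\psi)(W\cdot\omega)$ for every $\omega\in H^4(\mathbb G)$. Applying this with $\omega=c_2(\mathcal Q)$ and with $\omega=c_1(\mathcal Q)^2$, and using that $\mathcal E=\psi^*\mathcal Q$, that $c_2(\mathcal Q)=\Omega(1,2)$, that $\sigma_1^2=\Omega(0,3)+\Omega(1,2)$ where $\sigma_1:=c_1(\mathcal Q)$ is the Pl\"ucker hyperplane class, and that $\Omega(0,3)^2=\Omega(1,2)^2=1$ while $\Omega(0,3)\cdot\Omega(1,2)=0$ in $H^*(\mathbb G)$, we obtain $c_2(\mathcal E)=\beta\deg\psi$ and $c_1(\mathcal E)^2=(\alpha+\beta)\deg\psi$; then \eqref{Hcube} gives $3=c_1(\mathcal E)^2-c_2(\mathcal E)=\alpha\deg\psi$, while $s=c_2(\mathcal E)$ by \eqref{s=c2}. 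For the second assertion, $\deg\psi$ divides both $\alpha\deg\psi=3$ and $\beta\deg\psi=s$, hence divides $\gcd(3,s)$; if $\gcd(3,s)=1$ this forces $\deg\psi=1$, i.e.\ $\psi$ is birational onto $W$, and then $\alpha=3$, $\beta=s$.

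For the third assertion, assume $\psi$ is an embedding, so that $X$ is the smooth surface $W\subset\mathbb G$ with $[X]=3\,\Omega(0,3)+s\,\Omega(1,2)$ in $H^4(\mathbb G)$ and $c_1(\mathcal E)=\sigma_1|_X$. The idea is to compute $e(X)-K_X^2$ in two ways. First, from the normal bundle sequence $0\to T_X\to T_{\mathbb G}|_X\to N\to0$ (with $N:=N_{X/\mathbb G}$ of rank $2$): comparing first Chern classes and using $-K_{\mathbb G}=4\sigma_1$ gives $c_1(N)=K_X+4\,c_1(\mathcal E)$; the self-intersection formula gives $\int_X c_2(N)=[X]\cdot[X]=9+s^2$; and comparing the $c_2$-level terms in the total Chern class identity $c(T_X)c(N)=c(T_{\mathbb G}|_X)$ and integrating over $X$ yields
\begin{equation}
e(X)-K_X^2-4\,K_X\cdot c_1(\mathcal E)+9+s^2=\int_X c_2\big(T_{\mathbb G}|_X\big).  \nonumber
\end{equation}
Since $c_2(T_{\mathbb G})=7\sigma_1^2=7\big(\Omega(0,3)+\Omega(1,2)\big)$ (a standard fact for $\mathbb G=\mathbb G(1,3)$), the right-hand side equals $7(3+s)$; plugging in $c_1(\mathcal E)^2=3+s$ (from \eqref{Hcube} and \eqref{s=c2}) and $K_X\cdot c_1(\mathcal E)=2g-2-c_1(\mathcal E)^2=2g-5-s$ (sectional genus formula together with \eqref{genus}), we get $e(X)-K_X^2=8g-s^2+3s-8$. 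Second, since the reduction morphism $r\colon S\to X$ is a composition of $s$ blow-ups of points, $K_X^2=K_S^2+s$ and $e(X)=e(S)-s$, so $e(X)-K_X^2=(e(S)-K_S^2)-2s$, with $e(S)$ and $K_S^2$ given by Miranda's formulas \eqref{M formulas} in terms of $b$ and $c$. Equating the two expressions for $e(X)-K_X^2$ and using \eqref{b} in the form $b=2g+4$ to eliminate $g$, a short manipulation reduces everything to $\tfrac12 b^2-b-2c=-(s-2)(s-3)$; substituting $b=-2b_1$ and $c=3b_2$ then gives $(s-2)(s-3)=-2b_1^2-2b_1+6b_2$, as desired.

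The main obstacle is the third assertion, where the essential difficulty is the amount of bookkeeping: one has to marshal correctly the Chern classes of $T_{\mathbb G}$ (namely $c_1(T_{\mathbb G})=4\sigma_1$ and $c_2(T_{\mathbb G})=7\sigma_1^2$), the self-intersection $[X]\cdot[X]$ computed in $\mathbb G$ from the bidegree, adjunction on $\mathbb G$, the identity $c_1(\mathcal E)=\sigma_1|_X$ under $X\cong W$, and the blow-up relations linking the invariants of $X$ and $S$, and only then feed in \eqref{Hcube}, \eqref{b} and Miranda's formulas so that the dependence on $g$, $b$, $c$ collapses to the single stated relation; a sign slip anywhere would spoil it. For the first two assertions the only subtlety is keeping straight the conventions on the two $2$-dimensional Schubert cycles, in particular the identifications $c_2(\mathcal Q)=\Omega(1,2)$ and $c_1(\mathcal Q)^2-c_2(\mathcal Q)=\Omega(0,3)$; granting these, those parts follow at once from the projection formula.
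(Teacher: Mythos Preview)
Your proof is correct and follows essentially the same strategy as the paper. For the first two assertions the arguments are identical; for the third, the paper invokes the \emph{formule clef} of Arrondo--Sols \cite[Proposition~2.1]{AS} for a smooth congruence in $\mathbb G(1,3)$, namely
\[
9+s^2 = 3(3+s) + 4(2g-2) + 2K_X^2 - 12\chi(\mathcal O_X),
\]
and then substitutes the values of $K_X^2$ and $\chi(\mathcal O_X)$ coming from Miranda's formulas via the birational morphism $r\colon S\to X$. Your normal-bundle computation of $e(X)-K_X^2$ is precisely a derivation of this formule clef (indeed, Noether's formula $12\chi(\mathcal O_X)=K_X^2+e(X)$ converts one into the other), so the two arguments coincide in substance; yours is simply more self-contained, trading the black-box citation for the explicit input $c_2(T_{\mathbb G})=7\sigma_1^2$.
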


\begin{proof} Writing $W= \alpha\Omega(0,3) + \beta\Omega(1,2)$ as we said, and recalling that
$c_1(\mathcal Q)= \mathcal O_{\mathbb G}(1)$ and $c_2(\mathcal Q)=\Omega(1,2)$,
by the functoriality of the Chern classes we get
\begin{equation}
c_2(\mathcal E)=\psi^*c_2(\mathcal Q_W)=\deg \psi \ \Big(\Omega(1,2)
\cdot \big(\alpha \Omega(0,3) + \beta \Omega(1,2)\big) \Big)
=\beta \ \deg \psi.  \nonumber
\end{equation}
and
\begin{equation}
c_1(\mathcal E)^2= \psi^*\big(\mathcal O_{\mathbb G}(1)_W\big)^2  =  \deg \psi \ \deg(W) =  (\alpha+\beta) \deg \psi.  \nonumber
\end{equation}
Therefore, \eqref{Hcube} gives $3=\alpha \deg \psi$ and this, in turn, combined with \eqref{s=c2} proves
the assertion on the birationality of $\psi$ and the bidegree of $W$.
Finally, if $\psi$ is an embedding, the ``formule clef\ " applied to the smooth congruence
$X \cong W$ in $\mathbb G$
\cite[Proposition 2.1]{AS} implies
\begin{equation}
9+s^2 = 3(3+s) + 4(2g-2) + 2K_X^2 -12\chi(\mathcal O_X). \nonumber
\end{equation}
Taking into account \eqref{b} and the expressions of $K_X^2$ and $\chi(\mathcal O_X)$
deriving from \eqref{M formulas} in view of the birationality between $S$ and $X$,
this proves the final relation.
\end{proof}

\begin{rmk}
We emphasize that $\psi: X \to \mathbb G$ can be an embedding although $\mathcal E$ is not very ample (see
\cite[Proposition 2.4 and Remarks 2.5 and 2.6]{Ar}). However, if $(Y,H)$ is as in the obvious case, then $\mathcal E$ is very ample,
$\psi$ is in fact an embedding, and $W$ is the Veronese surface; in this case both sides of the equality
in the last display are equal to $10$.
\end{rmk}


\section{Scrolls over $\mathbb P^2$} \label{overP2}

\setcounter{equation}{0}

Let $(Y,H)$, $\pi:Y \to X$, and $\mathcal E$ be as in our setting \eqref{setting} again and let
$\varphi:S \to \mathbb P^2$ be the triple plane induced by $\phi$ as in Section \ref{constraints}.
When $X=\mathbb P^2$, the possibilities for $\mathcal E$, $b$ and $c$ are extremely restricted.

\begin{proposition} \label{P2}
Let things be as above and suppose that
$\varphi:S \to \mathbb P^2$ is a general triple plane. If $X=\mathbb P^2$,
then either $(Y,H)$ is as in the obvious case or it has the following characters:
\begin{equation} \label{candidate}
c_1(\mathcal  E)=\mathcal O_{\mathbb P^2}(4), \quad s=13, \quad b=10, \quad c=21.
\end{equation}
\end{proposition}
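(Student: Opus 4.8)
The plan is to turn the hypotheses into a single Diophantine equation in one variable. Assume throughout that $(Y,H)$ is not as in the obvious case, so that everything developed in Sections~\ref{on surfaces} and~\ref{constraints} is available; we must derive \eqref{candidate}. Since $X=\mathbb P^2$, write $c_1(\mathcal E)=\mathcal O_{\mathbb P^2}(k)$ for some integer $k$; ampleness forces $k\ge 2$ (for instance $c_2(\mathcal E)=k^2-3>0$ by \cite{BG}, or restrict $\mathcal E$ to a line). Then \eqref{Hcube} gives $c_2(\mathcal E)=k^2-3$, hence $s=k^2-3$ by \eqref{s=c2}; \eqref{genus} gives $g=g\big(\mathbb P^2,\mathcal O_{\mathbb P^2}(k)\big)=\binom{k-1}{2}$; and \eqref{b} gives $b=2g+4=(k-1)(k-2)+4=k^2-3k+6$.

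The key step is to exploit the birational geometry of the general surface section $S$ a second, independent time. Since $r:S\to X=\mathbb P^2$ is a birational morphism contracting exactly $s$ exceptional curves, one has $K_S^2=K_{\mathbb P^2}^2-s=9-s$ and $e(S)=e(\mathbb P^2)+s=3+s$; on the other hand Miranda's formulas \eqref{M formulas} express $K_S^2$ and $e(S)$ in terms of $b$ and $c$. Equating the two descriptions yields
\[
c \;=\; 18-6b+\tfrac12 b^2+s \qquad\text{and}\qquad c\;=\;2-b+\tfrac13 b^2-\tfrac13 s,
\]
and eliminating $c$ and clearing denominators produces the constraint
\[
b^2-30b+96+8s=0 .
\]
This is the heart of the matter: it links the cover data $b,c$, the Chern datum $s=c_2(\mathcal E)$, and the surface invariants of $S$, and being an equation (not an identity) it is genuinely restrictive.

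I would then substitute $b=k^2-3k+6$ and $s=k^2-3$ into the last equation; a short computation collapses it to
\[
k^4-6k^3-k^2+54k-72=0 ,
\]
which factors as $(k+3)(k-2)(k-3)(k-4)=0$. Since $k\ge 2$, this leaves $k\in\{2,3,4\}$. For $k=2$ we get $c_2(\mathcal E)=1$, which by \cite{LS} forces $(X,\mathcal E)=\big(\mathbb P^2,\mathcal O_{\mathbb P^2}(1)^{\oplus 2}\big)$, the obvious case — excluded by assumption. For $k=3$ we get $g=\binom{2}{2}=1$, contradicting Proposition~\ref{prop 4}. Hence $k=4$, and then $s=c_2(\mathcal E)=13$, $b=10$, while either expression for $c$ above gives $c=21$; this is exactly \eqref{candidate}.

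As for difficulty, the individual computations are routine and the argument needs no hypothesis beyond those already in force (in particular, it does not require $\psi:X\to\mathbb G$ to be an embedding). The one point that needs care is conceptual rather than technical: recognising that, for $X=\mathbb P^2$, the blow-up relations $K_S^2=9-s$, $e(S)=3+s$ together with Miranda's formulas over-determine $(b,c,s)$, so that eliminating $c$ yields a bona fide constraint; after that the factorisation of the resulting quartic finishes the proof. A useful consistency check along the way is that the two formulas for $c$ return the same value $c=21$ when $k=4$, which they do.
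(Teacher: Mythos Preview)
Your proof is correct, and in fact cleaner than the paper's. Both arguments derive the same key constraint $b^2-30b+96+8s=0$ (the paper phrases it as $b^2-30b+8(12+s)=0$), obtained by combining Miranda's formulas with the blow-up relations $K_S^2=K_{\mathbb P^2}^2-s$, $e(S)=e(\mathbb P^2)+s$; the paper routes this through $p_g(S)=0$ and $3e(S)-K_S^2=4s$, which is equivalent. The difference lies in how the equation is solved. The paper treats $b$ and $s$ as independent, lists all twelve integer pairs $(s,b)$ with $129-8s$ a perfect square, and then eliminates the spurious cases one by one using Remark~\ref{b>=10}, the Clebsch formula for $g$, and finally \eqref{Hcube}. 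You instead feed the relations $s=k^2-3$ and $b=k^2-3k+6$ (which already encode \eqref{Hcube} and \eqref{genus}) into the constraint from the outset, reducing everything to the single quartic $(k+3)(k-2)(k-3)(k-4)=0$; the case distinctions then collapse to $k=2$ (obvious case via \cite{LS}), $k=3$ (excluded by Proposition~\ref{prop 4}), and $k=4$ (the surviving candidate). Your organisation avoids the twelve-entry table and the subsequent case-by-case elimination entirely, at the cost of a slightly longer polynomial expansion; the underlying ingredients are identical.
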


\begin{proof} For $X=\mathbb P^2$ we have $3e(S)-K_S^2= 3e(X)-K_X^2+4s=4s$.
Then \eqref{M formulas} combined with the expression $c=\frac{3}{8}\ b(b-6)+6$ deriving from the
condition $p_g(S)=0$ leads to the equation $b^2-30b+8(12+s)=0$. Hence $b = 15 \pm \sqrt{D}$,
where $D=129-8s$. Imposing that $D$ is non-negative we get the bound $s \leq 16$ and then the list of the admissible
values of $s$ follows by requiring that $D$ is the square of an integer. These values, together with
the corresponding $b$ and $c$ deriving from the above relations, are summarized in Table 1 below.

{\small
\begin{table}[!h]
\begin{center}
\begin{tabular}{ | c | c | c | c | c | c | c | c | c | c | c | c | c | }    \hline
\text{\rm{case}} & $1$ & $2$ & $3$ & $4$ &  $5$ & $6$ & $7$ & $8$ & $9$ & $10$ & $11$ & $12$   \\  \hline  \hline
                   $s$   & $1$ & $6$ & $10$ & $13$ & $15$ & $16$ & $16$ & $15$ & $13$ & $10$ & $6$ & $1$ \\  \hline
                 $b$   & $4$ & $6$ & $8$ & $10$ & $12$ & $14$ & $16$ & $18$ & $20$ & $22$ & $24$ & $26$ \\  \hline
                  $c$   &  $3$ & $6$ & $12$ & $21$ & $33$ & $48$ & $66$ & $87$ & $111$ & $138$ & $168$ & $201$ \\  \hline
\end{tabular}
\smallskip
\caption{
}
\label{Table1}
\end{center}
\end{table} }

\noindent
Clearly case 1 corresponds to $(Y,H)$ being as in the obvious case while case 4 corresponds
to the further possibility mentioned in the statement.
So, it is enough to show that all remaining cases cannot occur. Clearly cases 2 and 3 are ruled out by Remark
\ref{b>=10}. Consider the remaining cases 5--12 and set $c_1(\mathcal E) = \mathcal O_{\mathbb P^2}(a)$.
Recalling \eqref{b}, \eqref{genus} and  the fact that $\big(\mathbb P^2, \mathcal O_{\mathbb P^2}(a)\big)$ is the minimal
reduction of $(S,H_S)$, Clebsch formula implies that  $g=\frac{1}{2}b-2 = \frac{1}{2} (a-1)(a-2)$. This rules out
all cases except cases 7 and 11, in which we get $a=5$ and $6$ respectively.
However, computing $c_1(\mathcal E)^2-c_2(\mathcal E)=a^2-s$ in these cases we see that
condition \eqref{Hcube} is not satisfied.
\end{proof}

Now suppose that $(Y,H)$ is a scroll over $X=\mathbb P^2$ with the characters as in \eqref{candidate}.
For the description of the triple plane $\varphi:S \to \mathbb P^2$ in this case we refer to
\cite[2.2 and 3.4]{FPV}. We have $\mathcal T = T_{\mathbb P^2}(-4) = \Omega^1_{\mathbb P^2}(-1)$,
in view of the natural identification $\Omega^1_{\mathbb P^2}\cong T_{\mathbb P^2}\otimes \det \Omega^1_{\mathbb P^2}
=T_{\mathbb P^2}(-3)$. In particular, $\mathcal T$ is stable.
Moreover, we can observe that in this case the triple plane $\varphi:S \to \mathbb P^2$
is general, regardless of the assumption made in Proposition \ref{P2}. Actually,
the vector bundle $S^3\mathcal T^{\vee} \otimes \det \mathcal T=
S^3\big(T_{\mathbb P^2}(-1)\big) \otimes \mathcal O_{\mathbb P^2}(1)$ is spanned, due to the Euler sequence. Thus
by combining \cite[Theorem 1.1]{Mi3ple} and \cite[Theorem 2.1 and Theorem 3.2]{T} with the fact
that Corollary \ref{co} prevents $\varphi$ from being totally ramified, we conclude that $\varphi:S \to \mathbb P^2$ is general.
By \eqref{b} we get $g=3$, since $b=10$, and then Proposition \ref{prop 4} applies.
We know that $(X,\det \mathcal E)=\big(\mathbb P^2, \mathcal O_{\mathbb P^2}(4)\big)$, hence
$h^0(\det \mathcal E)=15$ and therefore Proposition \ref{triple plane} tells us that
the triple plane $\varphi:S \to \mathbb P^2$ is defined via
the linear system of plane quartics passing
through $13$ points that impose only $12$ independent
linear conditions on them (see also \cite[Proposition 3.7]{FPV} and \cite[p.\ 1158]{Mi3ple}).
Clearly, such a triple plane exists.
However, it cannot derive from a pair $(Y,H)$ as in our setting.
To see this, let $\widetilde{\mathcal T}$ be the Tschirnhaus bundle of $\phi$,
i.e. $\phi_* \mathcal O_Y = \mathcal O_{\mathbb P^3} \oplus \widetilde{\mathcal T}$.
If $\Pi=\mathbb P^2 \subset \mathbb P^3$ is the
plane such that $S=\phi^{-1}(\Pi)$, then
\begin{equation}
\mathcal O_{\mathbb P^2} \oplus \mathcal T = \varphi_*\mathcal O_S = \phi_*(\mathcal O_Y|_S) =
(\mathcal O_{\mathbb P^3} \oplus \widetilde{\mathcal T})|_{\Pi} = \mathcal O_{\Pi} \oplus
\widetilde{\mathcal T}|_{\Pi},  \nonumber
\end{equation}
hence $\widetilde{\mathcal T}|_{\mathbb P^2}=\mathcal T$,
and therefore $c_i(\mathcal  T)= c_i(\widetilde{\mathcal T})|_{\mathbb P^2}$.
We know that
$b_1=-\frac{b}{2}=-5, b_2=\frac{1}{3}c=7$ by Proposition \ref{P2}. As a consequence, $\widetilde{\mathcal T}$
has Chern classes $-5h$ and $7h^2$,
respectively, where $h=\mathcal O_{\mathbb P^3}(1)$.
But this contradicts the Schwarzenberger condition $c_1 \cdot c_2 \equiv 0$ (mod. 2),
necessary for the existence of a rank 2 vector bundle on $\mathbb P^3$
\cite[p.\ 113]{OSS}.

\medskip

In conclusion, we have the following result.

\begin{theorem}\label{conclusion}
Let $\phi:Y \to \mathbb P^3$ be a triple solid, $H=\phi^*\mathcal O_{\mathbb P^3}(1)$ and suppose
that $\varphi:S \to \mathbb P^2$ is a general triple plane.
If $Y$ is a scroll over $\mathbb P^2$ for some polarization, then $(Y,H)$ is
necessarily as in the obvious case.
\end{theorem}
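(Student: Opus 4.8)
The plan is to reduce, via the numerical classification already in hand, to a single candidate pair $(Y,H)$ and then kill it with a parity obstruction for rank-$2$ bundles on $\mathbb P^3$. First I would invoke the setting \eqref{setting}: since $Y$ is a scroll over $\mathbb P^2$ for some polarization, $(Y,H)$ is itself a scroll over $X=\mathbb P^2$ via $\pi$, with $\mathcal E=\pi_*H$ ample and spanned of rank $2$ and $Y=\mathbb P_{\mathbb P^2}(\mathcal E)$. Because $\varphi:S\to\mathbb P^2$ is assumed to be a general triple plane, Proposition \ref{P2} leaves only two options: either $(Y,H)$ is as in the obvious case, and then we are done; or $(Y,H)$ has the characters \eqref{candidate}, namely $c_1(\mathcal E)=\mathcal O_{\mathbb P^2}(4)$, $s=c_2(\mathcal E)=13$, $b=10$, $c=21$. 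So it remains only to show that this candidate does not occur.

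In the candidate case I would first describe the Tschirnhaus bundle $\mathcal T$ of the induced triple plane $\varphi$. From $b_1=-b/2=-5$ and $b_2=c/3=7$, together with the analysis of triple planes of this numerical type in \cite[2.2 and 3.4]{FPV}, one identifies $\mathcal T\cong T_{\mathbb P^2}(-4)=\Omega^1_{\mathbb P^2}(-1)$, which is stable; and here $\varphi$ is automatically general in Miranda's sense (so the genericity hypothesis is not actually needed at this point), since $S^3\mathcal T^{\vee}\otimes\det\mathcal T$ is spanned by the Euler sequence and Corollary \ref{co} rules out total ramification, whence \cite[Theorem 1.1]{Mi3ple} and \cite[Theorems 2.1 and 3.2]{T} apply. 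In particular Proposition \ref{prop 4} applies, giving $g=3$ in agreement with \eqref{b}, and \eqref{Hcube} forces $c_2(\mathcal E)=13$ as expected.

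The crux is then to transfer this to the Tschirnhaus bundle $\widetilde{\mathcal T}$ of $\phi$ on $\mathbb P^3$, where $\phi_*\mathcal O_Y=\mathcal O_{\mathbb P^3}\oplus\widetilde{\mathcal T}$. Writing $S=\phi^{-1}(\Pi)$ for the plane $\Pi=\mathbb P^2\subset\mathbb P^3$ cutting out the general element $S\in\phi^*|\mathcal O_{\mathbb P^3}(1)|$, restricting $\phi_*\mathcal O_Y$ to $\Pi$ gives $\varphi_*\mathcal O_S=\mathcal O_\Pi\oplus\widetilde{\mathcal T}|_\Pi$, hence $\widetilde{\mathcal T}|_{\Pi}\cong\mathcal T$ and $c_i(\widetilde{\mathcal T})|_{\mathbb P^2}=c_i(\mathcal T)$; so $\widetilde{\mathcal T}$ has Chern classes $c_1=-5h$ and $c_2=7h^2$, with $h=\mathcal O_{\mathbb P^3}(1)$. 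But every rank-$2$ vector bundle on $\mathbb P^3$ satisfies the Schwarzenberger congruence $c_1\cdot c_2\equiv 0\pmod 2$ \cite[p.\ 113]{OSS}, while here $c_1\cdot c_2=-35$ is odd, a contradiction. Hence the candidate \eqref{candidate} is impossible, and $(Y,H)$ must be as in the obvious case.

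I expect the main obstacle to be the middle step: pinning down $\mathcal T$ precisely and verifying that the induced triple plane is general in Miranda's technical sense, so that the count $c=3b_2$ (equivalently $b_2=7$) is legitimate; this is exactly what feeds the integral Chern class $c_2(\widetilde{\mathcal T})=7h^2$ into the parity obstruction. Once the Chern classes of $\widetilde{\mathcal T}$ are secured, the final contradiction is immediate.
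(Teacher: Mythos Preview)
Your proposal is correct and follows essentially the same route as the paper: reduce via Proposition \ref{P2} to the single candidate \eqref{candidate}, then compare the Tschirnhaus bundles of $\varphi$ and $\phi$ to force $c_1(\widetilde{\mathcal T})=-5h$, $c_2(\widetilde{\mathcal T})=7h^2$, which violates the Schwarzenberger parity condition for rank-$2$ bundles on $\mathbb P^3$. One small comment on your self-assessment: the ``main obstacle'' you flag is not really there, because the theorem \emph{assumes} $\varphi$ is general, so $b_2=c/3=7$ is immediate from the value $c=21$ in Proposition \ref{P2}; the explicit identification $\mathcal T\cong\Omega^1_{\mathbb P^2}(-1)$ and the automatic genericity are, as in the paper, a useful side remark rather than a logical necessity for the contradiction.
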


The above result does not mean that the pair $(Y,H)$ as in the obvious case
is the only scroll over $\mathbb P^2$ containing a smooth surface which is a triple plane.
From this perspective we would like to emphasize the following fact.
\begin{rmk}\label{final}
Given a scroll $(Y,L)$ over $\mathbb P^2$ for some ample line bundle $L$, 
which is not as in the obvious case, it may happen that $Y$ contains a smooth surface $S$ such that:
i) $S$ has the structure of a triple plane, ii) $M:=\mathcal O_Y(S)$ is a very ample line bundle, and
iii) $M\not= L$.
To give an example, consider $Y:=\mathbb P(T_{\mathbb P^2})$. Recalling that
$Y$ is contained in $P:= \mathbb P^2_1 \times \mathbb P^2_2$ as a smooth element of $|\mathcal O_P(1,1)|$,
we see that $Y$ has two distinct structures of $\mathbb P^1$-bundle over $\mathbb P^2$, $\pi_i:Y \to \mathbb P^2_i$
$(i=1,2)$, induced by the projections of $P$ onto the two factors. Set $L:=\big(\mathcal O_P(1,1)\big)_Y$; then
$L$ is very ample, and we can regard $(Y,L)$ as a scroll over $\mathbb P^2$, e.g.\ via $\pi_1$.
As is well-known, the general element $\Sigma \in |L|$ is a del Pezzo surface of degree $6$
and $\pi_1|_{\Sigma }:\Sigma \to \mathbb P^2_1$ is a birational morphism consisting of the blow-up a three
general points. Now look at $\text{\rm{Pic}}(Y)$, which can be generated by
$L$ and $h:=\pi_1^*\mathcal O_{\mathbb P^2_1}(1)$. The line bundle $M:=L+2h$
is clearly very ample. Let $S \in |M|$ be a general element: then $S$ is a smooth surface,
and $\varphi:=\pi_2|_S:S \to \mathbb P^2_2$ is a triple plane. Actually, $\varphi$ is a finite morphism
and recalling that $\mathcal O_P(1,0)^3=\mathcal O_P(0,1)^3 =0$
and  $\mathcal O_P(1,0)^2 \cdot \mathcal O_P(0,1)^2 =1$, we see that its degree is computed by
\begin{equation}
S \cdot \big(\mathcal O_P(0,1)_ Y\big)^2 =  (L+2h) \cdot  \big(\mathcal O_P(0,1)_ Y\big)^2 = \mathcal O_P(3,1) \cdot
\mathcal O_P(1,1) \cdot \big(\mathcal O_P(0,1)\big)^2 = 3. \nonumber
\end{equation}
\end{rmk}

Finally, restricting our attention to triple solids with sectional genus $3$, we want to stress that Theorem
\ref{conclusion} constitutes a significant progress
compared with \cite[Proposition 3.3]{LL}.

\medskip
{\bf{Acknowledgments.}} The first author is grateful to Prof.\ Jack Huizenga for useful correspondence
on the range of applicability of the ampleness criterion in \cite[Theorem 5.1]{HK} in connection with
a vector bundle corresponding to \eqref{candidate}. Both authors are members of INdAM (G.N.S.A.G.A.).

\providecommand{\bysame}{\leavevmode\hbox
to3em{\hrulefill}\thinspace}

\newpage

\end{document}